\newcounter{algorithm}
\theoremstyle{plain}
\newtheorem{thm}{Theorem}
\newtheorem{lem}[thm]{Lemma}
\newtheorem{cl}[thm]{Corollary}
\theoremstyle{definition}
\theoremstyle{remark}
\providecommand{\abs}[1]{\left\lvert #1 \right\rvert}
\providecommand{\set}[1]{\left\lbrace #1 \right\rbrace}
\providecommand{\gen}[1]{\left\langle #1 \right\rangle}
\newcommand{\field}[1]{\mathbb{#1}}
\newcommand{\N}{\field{N}}
\newcommand{\F}{\field{F}}
\newcommand{\MAGMA}{\textsc{Magma}}
\def\cyc#1{\mathord{\mathrm{C}_{#1}}}
\def\dih#1{\mathord{\mathrm{D}_{#1}}}
\def\cen{\mathord{\mathrm{C}}}
\def\nrm{\mathord{\mathrm{N}}}
\let\le=\leqslant
\let\ge=\geqslant
\newcommand{\OV}{\mathcal{O}}
\DeclareMathOperator{\GL}{GL}
\DeclareMathOperator{\I}{Id}
\DeclareMathOperator{\Sz}{Sz}
\DeclareMathOperator{\diag}{diag}
\DeclareMathOperator{\antidiag}{antidiag}
\DeclareMathOperator{\Tr}{Tr}
\DeclareMathOperator{\SLP}{\mathrm{SLP}}
\DeclareMathOperator{\Cent}{C}
\DeclareMathOperator{\O2}{O_2}
\DeclareMathOperator{\Mat}{Mat}
\newcommand{\OR}[1]{\operatorname{O} \bigl( #1 \bigr)}
\title{A new method for recognising Suzuki groups}
\author{John N. Bray}
\email{j.n.bray@qmul.ac.uk}
\address{School of Mathematical Sciences \\ Queen Mary, University of London \\ Mile End Road \\ London E1 4NS \\ United Kingdom}
\author{Henrik B\"a\"arnhielm}
\email{henrik@math.auckland.ac.nz}
\address{Department of Mathematics \\ University of Auckland \\ Private Bag 92019 \\ Auckland \\ New Zealand}
\begin{document}

\begin{abstract}
We present a new algorithm for constructive recognition of the Suzuki groups in their natural representations.
The algorithm runs in Las Vegas polynomial time given a discrete logarithm oracle. An implementation is available in the $\MAGMA$ computer algebra system.
\end{abstract}

\maketitle

\section{Introduction}
\label{section:intro}

In \cite{baarnhielm_phd} and \cite{baarnhielm05}, algorithms for
constructive recognition of the Suzuki groups in the natural
representation are presented. They depend on a technical conjecture,
which is still open, although supported by substantial experimental evidence.

Here we present a new algorithm for this problem, which does not
depend on any such conjectures, and which is also more efficient.

We shall use the notation of \cite{baarnhielm05}, but for completeness we state the important points here.
The ground finite field is $\F_q$ where $q = 2^{2m + 1}$ for some $m > 0$,
and we define $t = 2^{m + 1}$ so that $x^{t^2} = x^2$ for every $x \in \F_q$.
For $a, b \in \F_q$ and $\lambda \in \F_q^{\times}$, define the following matrices.
\begin{equation}
U(a, b) = \begin{bmatrix}
1 & 0 & 0 & 0\ \\
a & 1 & 0 & 0\ \\
a^{t+1} + b & a^t & 1 & 0\ \\
a^{t + 2} + ab + b^t & b & a & 1\ 
\end{bmatrix}, 
\end{equation}
\begin{equation}
M^{\prime}(\lambda) = \begin{bmatrix}
\lambda^{t+1} & 0 & 0 & 0 \\
0 & \lambda & 0 & 0 \\
0 & 0 & \lambda^{-1} & 0 \\
0 & 0 & 0 & \lambda^{-t-1}
\end{bmatrix}, 
\end{equation}
\begin{equation}
T = \begin{bmatrix}
\ 0 & 0 & 0 & 1\ \\
\ 0 & 0 & 1 & 0\ \\
\ 0 & 1 & 0 & 0\ \\
\ 1 & 0 & 0 & 0\  
\end{bmatrix}.
\end{equation}
If $\omega \in \F_q$ is a primitive element, then $\Sz(q) = \gen{U(1, 0), M^{\prime}(\omega), T}$.
This is our standard copy of $\Sz(q)$, denoted $\Sigma$.
This group acts on the Suzuki ovoid, which is
\begin{equation} \label{ovoid_def}
\OV = \set{(1 : 0 : 0 : 0)} \cup \set{\,(a^{t + 2} + a b + b^t : b : a : 1) \mid a,b \in \F_q\,}.
\end{equation}
Let $\mathcal{F} = \set{\,U(a, b) \mid a, b \in \F_q\,}$ and
$\mathcal{H} = \set{\,M^{\prime}(\lambda) \mid \lambda \in \F_q^{\times}\,}$.
Then $\mathcal{F}\mathcal{H}=\mathcal{H}\mathcal{F}$ is the stabiliser of $(1 : 0 : 0 : 0) \in \OV$,
a maximal subgroup of $\Sz(q)$ and  $\mathcal{F}\mathcal{H} = \gen{U(1, 0), M^{\prime}(\omega)}
\cong \F_q . \F_q . \F_q^{\times}$. The group $\Sz(q)$ is partitioned into two sets as
\begin{equation} \label{sz_partition}
\Sz(q) = \mathcal{F}\mathcal{H} \cup \mathcal{F}\mathcal{H}T\mathcal{F}
 = \mathcal{H}\mathcal{F} \cup \mathcal{H}\mathcal{F}T\mathcal{F}.
\end{equation}

If $G$ is a conjugate of $\Sz(q)$, so that $G^c = \Sz(q)$ for some $c \in \GL(4, q)$,
we say that the ordered triple of elements $\alpha, h, \gamma \in G$ are
\emph{rewriting generators for $G$ with respect to $c$} if
\begin{itemize}
\item $\alpha^c \in \mathcal{F}$, $h^c \in \mathcal{F}\mathcal{H}$, $\gamma^c = T$,
\item $\alpha$ has order $4$ and $h$ has odd order not dividing $r-1$ for any
$r$ such that $q$ is a non-trivial power of $r$.
\end{itemize}
Note that these conditions imply that $G = \gen{\alpha, h, \gamma}$.

The main results are Theorem \ref{main_theorem} and \ref{main_theorem2}. The following result is a consequence:

\begin{thm} Given a random element oracle for subgroups of\/
  $\GL(4, q)$ and an oracle for the discrete logarithm problem in
  $\F_q$:

\begin{itemize}
\item There exists a Las Vegas algorithm that, given a conjugate
  $G = \gen{X}$ of\/ $\Sigma$, constructs $g \in \GL(4, q)$ such that
  $\gen{X}^g = \Sigma$ and also constructs rewriting generators for $G$
  with respect to $g$ as $\SLP$s in $X$.
  The algorithm has expected time complexity\/
  $\OR{\log(q)\log\log(q)}$ field operations. 
\item Let $h\in \GL(4,F)$. Then:
\begin{enumerate}
\item We can determine whether $h \in G$ in time $\OR{1}$ field operations.
\item Given some preprocessing that is independent of $h$, and given
  that $h \in G$, we can construct $h$ as an\/ $\SLP$ in $X$ in time
  $\OR{\log(q)}$ field operations. The preprocessing, which only has
  to be done once per constructive recognition of $G$, has complexity
  $\OR{(\log(q))^2}$.
\end{enumerate}
\end{itemize}
\end{thm}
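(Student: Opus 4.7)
The plan is to deduce the theorem by assembling the two main results, Theorem \ref{main_theorem} and Theorem \ref{main_theorem2}, with the standing oracles providing random elements and discrete logarithms in $\F_q$.

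For the first bullet, I would invoke Theorem \ref{main_theorem} to produce the conjugating element $g$ sending $G$ to the standard copy $\Sigma$, together with rewriting generators $\alpha, h, \gamma$ for $G$ with respect to $g$, already expressed as $\SLP$s in the input generating set $X$. The Las Vegas character arises from the search, via the random-element oracle, for elements of the required orders (an order-$4$ element and an element of odd order avoiding certain subfield divisors), while the discrete logarithm oracle is used to normalise the matrix entries over $\F_q$. The expected complexity of $\OR{\log(q)\log\log(q)}$ field operations should drop out of the proportion of elements of the desired orders in $\Sz(q)$ combined with the cost of the arithmetic post-processing.

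For the second bullet, both tasks transport to the standard copy by conjugation by $g$. Membership testing amounts to deciding whether $h^g \in \Sigma$, and since $\Sigma$ is characterised as the stabiliser of the ovoid $\OV$ from \eqref{ovoid_def}, it suffices to verify that $h^g$ maps a constant number of points of $\OV$ into $\OV$ (each condition is a single polynomial relation), for $\OR{1}$ field operations. Assuming $h^g \in \Sigma$, Theorem \ref{main_theorem2} is invoked to compute an $\SLP$ for $h^g$ in the rewriting generators; composing with the $\SLP$s already produced for $\alpha, h, \gamma$ yields an $\SLP$ for $h$ in $X$ at no additional asymptotic cost. The per-element bound $\OR{\log(q)}$ and the preprocessing bound $\OR{(\log q)^2}$ are inherited directly from Theorem \ref{main_theorem2}.

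The main obstacle, which I expect to constitute the substance of Theorem \ref{main_theorem2}, is producing $\SLP$s for arbitrary $U(a, b) \in \mathcal{F}$ and $M^{\prime}(\lambda) \in \mathcal{H}$ in the three rewriting generators within logarithmic cost. Three ingredients should suffice: the Bruhat-style decomposition $\Sz(q) = \mathcal{F}\mathcal{H} \cup \mathcal{F}\mathcal{H}T\mathcal{F}$ of \eqref{sz_partition}, which reduces the rewriting of an arbitrary element to rewriting inside $\mathcal{F}$ and $\mathcal{H}$ after at most one application of $\gamma$; the $\F_2$-linear structure of $\mathcal{F}$ (via Frobenius action) together with the cyclic structure of $\mathcal{H}$, enabling an $\F_2$-basis of $\SLP$s for generators of $\mathcal{F}$ and $\mathcal{H}$ to be precomputed once in time $\OR{(\log q)^2}$; and the discrete logarithm oracle, which lets the coordinates of the parameters $a$, $b$, $\lambda$ relative to that basis be recovered and assembled into an $\SLP$ in time $\OR{\log q}$.
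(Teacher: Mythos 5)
Your top-level strategy is exactly the paper's: this theorem is stated there literally as ``a consequence'' of Theorem~\ref{main_theorem} and Theorem~\ref{main_theorem2}, so simply invoking the first bullet from Theorem~\ref{main_theorem} and the second from Theorem~\ref{main_theorem2}, and composing SLPs, is all that is required. That part of your proposal is fine. However, two of the speculative elaborations you offer are incorrect, and since you built part of the justification on them they are worth correcting.

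Your proposed membership test---that ``it suffices to verify that $h^g$ maps a constant number of points of $\OV$ into $\OV$''---does not characterise membership in $\Sigma$. Scalar matrices fix every projective point, hence stabilise $\OV$ setwise, yet the only scalar in $\Sigma=\Sz(q)$ is the identity (the group has trivial centre). So a nontrivial scalar multiple of an element of $\Sigma$ would pass your test while not lying in $G$. Moreover, verifying that a bounded number of ovoid points land in $\OV$ would not, without further argument, establish that the whole ovoid is preserved. The paper's Theorem~\ref{main_theorem2}(1) instead tests membership constructively via the Bruhat decomposition \eqref{sz_partition}: it reads candidate parameters off the entries of $h^g$ and checks that $h^g$ actually equals $M'(\lambda)U(c,d)$ or $M'(\lambda)U(c,d)\,T\,U(a,b)$; this is a genuine if-and-only-if criterion and costs $\OR{1}$ field operations. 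You should simply cite Theorem~\ref{main_theorem2}(1) rather than re-derive the test.

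Your final paragraph also assigns the discrete logarithm oracle a role in recovering the parameter coordinates during rewriting. This contradicts the paper: Theorem~\ref{main_theorem} states explicitly that the discrete logarithm oracle ``is only needed in the initial phase, in order to obtain an element of order~$4$.'' The rewriting in Theorem~\ref{main_theorem2} uses only $\F_2$-linear algebra: one precomputes inverses of two $\log_2(q)\times\log_2(q)$ matrices over $\F_2$ (giving the $\OR{(\log q)^2}$ preprocessing bound) and then solves linear systems to express $a$ and the residual $\beta$ in terms of the bases $\{\mu^{it}a_1\}$ and $\{\mu^{(t+2)i}a_1^{t+1}\}$ of $\F_q$ over $\F_2$, while $M'(\lambda)$ is reduced to a product of $T$ and $U$-elements. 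No discrete logarithms occur in this phase.
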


\section{Overview}

The group $G = \Sz(q)$ acts doubly transitively on $\OV$. In
\cite[Corollary 4.8]{baarnhielm05}, an algorithm for constructing a
generating set of a stabiliser of a given point $P \in \OV$ is
described. The generators are constructed as $\SLP$s in a given
generating set $X$ for $\Sz(q)$.

This is then used in \cite[Theorem 5.2]{baarnhielm05} to obtain
generating sets for the stabilisers of the two points $P_{\infty} = (1 : 0 : 0 :0)$ and
$P_0 = (0 : 0 : 0 : 1)$, which consist of lower and upper
triangular matrices, respectively. These stabilisers are used in
constructive membership testing to convert any given $g \in \Sz(q)$ to
an $\SLP$ in $X$, given as \cite[Algorithm 2]{baarnhielm05}. 

In the natural representation, the constructive recognition problem
reduces to the construction of a conjugating matrix, and \cite[Theorem
5.2]{baarnhielm05} is also used.

The new algorithm presented here replaces the central task of
constructing a point stabiliser with a new method. A difference
compared to the old method is that the stabilised point of $\OV$ is
not part of the input, instead a random point stabiliser is
constructed. We therefore obtain two random point stabilisers of the
input group, but it turns out that we can construct a matrix that
conjugates the input group to $G$ and also the two point stabilisers
to $G_{P_{\infty}}$ and $G_{P_0}$. This is done using a simplified version of \cite[Lemma 7.4]{baarnhielm05}. After that we perform
constructive membership testing with a method similar to \cite[Theorem
5.2]{baarnhielm05}, but here we present a deterministic algorithm
instead of a probabilistic one.

\section{Constructing a point stabiliser}

The algorithm consists of two parts. The second part is an adaptation of
\cite[Theorem 17]{sz_std_gens} for the natural representation.
This is given as Theorem \ref{thm_bray_stab_trick}.

The first part is the algorithm that constructs an element of order
$4$, to be used as input to the algorithm in Theorem
\ref{thm_bray_stab_trick}. This is presented as Algorithm
\ref{alg:new_sz_trick}. We use $\Mat_n(F)$ to denote the algebra of
$n$ by $n$ matrices over the field $F$. To calculate the pseudo-order
of a matrix, we use \cite{crlg95} (see also \cite[Section
2.2]{classical_recognise}). Note that we do not need to compute
precise orders, hence large integer factorisation can be avoided. On
the other hand, it turns out that one can use integer factorisation to
avoid some potential discrete logarithm calculations.

We make use of a discrete logarithm oracle \textsc{DiscreteLog}$(\lambda, a)$,
which returns $k \ge 0$ such that $\lambda^k = a$, or $-1$
if no such number exists, that is if $a \notin \gen{\lambda}
\le \F_q^{\times}$. We also make use of a random element oracle
\textsc{Random}$(G)$ that returns independent random elements of a group
$G = \gen{X}$ as $\SLP$s in $X$. This is polynomial time by \cite{babai91};
in practice we use the product replacement algorithm \cite{lg95}, which is also polynomial time by \cite{Pak00}.

We construct an element of order $4$ by constructing elements of trace $0$.
In $G$, the elements of trace $0$ have orders $1$, $2$ or $4$.

\begin{codebox}
\refstepcounter{algorithm}
\label{alg:new_sz_trick}
\Procname{\kw{Algorithm} \ref{alg:new_sz_trick}: $\proc{Order4Elt}(G)$}
\zi \kw{Input}: Group $G = \gen{X}$ where $X \subseteq \GL(4, q)$ such that $G \cong \Sz(q)$.
\zi \kw{Output}: Element $f \in G$ as $\SLP$ in $X$, such that $\abs{f} = 4$.
\li $g := \proc{Random}(G)$ \label{line:producing_g}
\li \If $\abs{g} \mid q-1$ and $g \neq 1$ \Then \label{line:finding_g}\label{line:order_g} 
\li     $u, c, \lambda := \proc{Diagonalise}(g)$ \label{line:diagonalise}
\li     \Comment{Now $u = g^c$ where $u = \diag(\lambda^{t + 1}, \lambda, \lambda^{-1}, \lambda^{-t-1}) \in G$ and $c \in \GL(4, q)$}
\li     $h := \proc{Random}(G)$
\li     $B := h^c$
\li     \If $u^B \neq u^{-1}$ \Then \label{line:B_u_compatible}
\li     $A := \diag(x y, x, x^{-1}, x^{-1} y^{-1}) \in \Mat_4(\F_q(x, y))$
\li     $\id{eqns} := \set{\Tr(AB)=0, \Tr(AB)^t=0}$ \label{line:alg_eqns}
\li     \Comment{$\id{eqns}$ determines $2$ polynomial equations in $x$ and $y$ (with $y$ used for $x^t$)}
\li     \If $\id{eqns}$ has a root $(r, r^t)$ for $(x, y)$ where $r \in \F_q^{\times}$ \Then \label{line:eqn_solve}
\li       $\hat{f} :=\diag(r^{t+1},r,r^{-1},r^{-t-1})B$
\li       \If $\abs{\hat{f}} = 4$ \Then  \label{line:a_order}
\li         $i := \proc{DiscreteLog}(\lambda, r)$
\li         \Comment{This will return $-1$ if $r \notin \gen{\lambda}$}
\li         \If $i \ge 0$ \Then \label{line:discrete_log_test}
\li             \Comment{Now $\lambda^i = r$}
\li             $f := g^i h$
\li             \Comment{Now $f$ is an SLP in $X$ and $\hat{f} = f^c$}
\li                 \Return $f$
                \End
\zi             \kw{end}
            \End
\zi         \kw{end}
        \End
\zi     \kw{end}
        \End
\zi     \kw{end}
        \End
\zi     \kw{end}
\li \Return \const{fail}
\end{codebox}

\begin{lem}\label{CanBeInStdSzq}
With $u$ and $B$ as in Algorithm \ref{alg:new_sz_trick}, there is a diagonal matrix
$\delta\in\GL(4,q)$ such that $u^{\delta}=u$ and $B^{\delta}$ lie in $\Sigma$.
\end{lem}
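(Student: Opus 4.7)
The plan is to exhibit $\delta$ of the form $\gamma^{-1}s_1$, where $\gamma$ carries $\Sigma$ onto $G^c$ and $s_1 \in \Sigma$ is a compensating element: then $u^{\delta}=u$ and $B^{\delta}\in\Sigma$ will follow from direct calculation, and the diagonality of $\delta$ will follow from the eigenvalue structure of $u$.

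Since $G \cong \Sz(q)$ acts faithfully on $\F_q^4$, it is $\GL(4,q)$-conjugate to $\Sigma$; fix any $\gamma \in \GL(4,q)$ with $\Sigma^{\gamma}=G^{c}$. Since $u \in G^c$, the element $v := \gamma u\gamma^{-1}$ lies in $\Sigma$. The key step is to show that $v$ and $u$ are $\Sigma$-conjugate: both are semisimple of order dividing $q-1$, so $v$ is $\Sigma$-conjugate to some $M^{\prime}(\mu)\in\mathcal{H}$, while $u = M^{\prime}(\lambda)$ already lies in $\mathcal{H}$. Since $v$ and $u$ share a $\GL(4,q)$-conjugacy class, $M^{\prime}(\mu)$ and $M^{\prime}(\lambda)$ have identical eigenvalue multisets, which forces $\mu\in\{\lambda,\lambda^{-1}\}$ under the order conditions on $\lambda$ implicit in the algorithm. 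Because $T\in\Sigma$ inverts $\mathcal{H}$, in either case we obtain $s_1\in\Sigma$ with $v^{s_1}=u$.

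Set $\delta:=\gamma^{-1}s_1$. A direct computation yields
\[
u^{\delta} \;=\; s_1^{-1}\gamma u\gamma^{-1}s_1 \;=\; s_1^{-1}v s_1 \;=\; u,
\]
so $\delta$ centralises $u$. Since $B\in G^c=\Sigma^{\gamma}$ we have $\gamma B\gamma^{-1}\in\Sigma$, and hence
\[
B^{\delta} \;=\; s_1^{-1}(\gamma B\gamma^{-1})s_1 \;\in\; s_1^{-1}\Sigma s_1 \;=\; \Sigma.
\]
Under the same order conditions (using $g\neq 1$, $\gcd(t,q-1)=1$, and that $q-1$ is odd), the four eigenvalues $\lambda^{t+1},\lambda,\lambda^{-1},\lambda^{-t-1}$ of $u$ are pairwise distinct, so $\Cent_{\GL(4,q)}(u)$ equals the diagonal torus and $\delta$ is diagonal.

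I expect the main obstacle to be the arithmetic step forcing $\mu\in\{\lambda,\lambda^{-1}\}$: a priori, matching the eigenvalues of $M^{\prime}(\mu)$ and $M^{\prime}(\lambda)$ could also permit $\mu\in\{\lambda^{t+1},\lambda^{-t-1}\}$, and one must invoke the order restrictions on $\lambda$ together with $t^2\equiv 2 \pmod{q-1}$ to rule these out. The same input feeds the distinctness of the eigenvalues of $u$. Once this arithmetic is in place, the remainder of the argument is a one-line manipulation of conjugations.
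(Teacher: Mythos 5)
Your proof follows the same route as the paper's: exploit the fact that all copies of $\Sz(q)$ in $\GL(4,q)$ are conjugate, correct the resulting conjugator by a $\Sigma$-element (composing with $T$ if necessary to undo an inversion of $u$) so that the net conjugator centralises $u$, and conclude it is diagonal since the four eigenvalues of $u$ are pairwise distinct. The conjugation bookkeeping and the reduction to $\mu\in\{\lambda^{\pm 1}\}$ versus $\mu\in\{\lambda^{\pm(t+1)}\}$ are exactly what the paper does, just phrased via $s_1\in\Sigma$ instead of the paper's $\gamma_2\zeta$.

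The one caveat is that you flag the decisive arithmetic -- ruling out $\mu=\lambda^{\pm(t+1)}$, and verifying that $\lambda^{t+1},\lambda,\lambda^{-1},\lambda^{-t-1}$ are distinct -- as the ``main obstacle'' rather than carrying it out. The paper closes that gap: matching eigenvalue multisets would give $\lambda^{(t+1)^2}=\lambda^{\pm 1}$, and since $(t+1)^2\equiv 2t+3\pmod{q-1}$ this becomes $\lambda^{2t+2}=1$ or $\lambda^{2t+4}=1$; as $q-1$ is odd and $t+1$, $t+2$ are units mod $q-1$, both exponents act invertibly on $\F_q^{\times}$, forcing $\lambda=1$, contrary to $g\neq 1$. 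A similar invertibility argument (e.g.\ raising $\lambda^{t+2}=1$ to the power $1-t/2$) gives the distinctness of the eigenvalues. With that arithmetic filled in, your proof is complete and matches the paper's.
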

\begin{proof}
Let $\Delta$ be the subgroup of diagonal matrices of $\GL(4,q)$.

We observe that $\lambda^{t+1}, \lambda, \lambda^{-1}, \lambda^{-t-1}$ are distinct elements of
$\F_q^{\times}$, since $\lambda\ne 1$ (enforced by $g\ne 1$). For example, if $\lambda^{t+1}=\lambda^{-1}$,
we obtain $\lambda^{t+2}=1$, and raising to the power of $1-(t/2)$ gives us $\lambda=1$.
Thus $\cen_{\GL(4,q)}(u)=\Delta$.

The normaliser of $\gen{u}$ in $\GL(4,q)$ must permute the eigenspaces of $u$,
and thus any element therein must raise $u$ to one of the powers $\pm 1$, $\pm (t+1)$.
We observe that $T$ inverts $u$, while $u^{\pm(t+1)}$ has eigenvalues $\lambda^{\pm(t+1)}$,
$\lambda^{\pm(t+1)^2}$. Therefore, if $u^{\pm(t+1)}$ is conjugate in $\GL(4,q)$ to $u$ or $u^{-1}$,
then $\lambda^{(t+1)^2}=\lambda$ or $\lambda^{-1}$.
But $(t+1)^2=2q+2t+1\equiv 2t+3\ \mathrm{mod}\ q-1$,
and so we get $\lambda^{2t+3}=\lambda$ or $\lambda^{-1}$.
Now $2t+2$ and $2t+4$ are invertible powers when applied to $\F_q^{\times}$, and so $\lambda=1$.
This contradiction implies that $u$ is not conjugate in $\GL(4,q)$ to $u^{\pm(t+1)}$.
Thus $\nrm_{\GL(4,q)}(\gen{u})=\Delta\gen{T}=\gen{T}\Delta$.

Now it is well-known that all copies of $\Sz(q)$ in $\GL(4,q)$ are conjugate therein.
Notice also that $(u,B)=(g,h)^c$.
Thus $\gen{u,B}^{\gamma_1}\le \Sigma$ for some $\gamma_1\in\GL(4,q)$.
Moreover, there is $\gamma_2\in\Sigma$ such that $(u^{\gamma_1})^{\gamma_2}$
is a diagonal matrix $\mathrm{diag}(\mu^{t+1},\mu,\mu^{-1},\mu^{-(t+1)})$.
Naturally, the eigenvalues of this diagonal matrix coincide with the eigenvalues of $u$,
and the argument of the previous paragraph now shows that $\mu=\lambda^{\pm 1}$.
Therefore $u^{\gamma_1\gamma_2}=u^{\pm1}$. So $\gamma_1\gamma_2$ normalises $\gen{u}$,
and so $\gamma_1\gamma_2=\delta\zeta$, where $\delta\in\Delta$ and $\zeta=\zeta^{-1}=T^j$
for some $j\in \{0,1\}$.
Thus $\gen{u,B}^{\delta}=(\gen{u,B}^{\gamma_1})^{\gamma_2\zeta}
\le \Sigma^{\gamma_2\zeta}=\Sigma$, since $\gamma_2,\zeta\in\Sigma$, and this
establishes our claim.
\end{proof}

The next theorem asserts that the equations we have to solve in
Algorithm \ref{alg:new_sz_trick} have few solutions (with one
exception) and also provides a way of reducing the problem of their
solution to finding roots of a univariate polynomial of degree at most
4 over $\F_q$. To summarise the next theorem, let the diagonal entries of $B$ be $a,b,c,d$. Line \ref{line:eqn_solve} is then done as follows:
\begin{itemize}
\item Line \ref{line:B_u_compatible} excludes the case $a = b = c = d = 0$.
\item If exactly three of $a,b,c,d$ are $0$, then the equations at line \ref{line:alg_eqns} have no solutions of the required type.
\item If $a=b=0$ and $c,d \neq 0$ then $y = d/c$.
\item If $c=d=0$ and $a,b \neq 0$ then $y = b/a$.
\item In all other cases, equation \eqref{HEq3} is a non-degenerate univariate polynomial equation in $y$, and we consider all non-zero roots of this. The roots can be found using \cite[Algorithm 14.10]{VonzurGathen03}.
\end{itemize}
For each value of $y$ obtained above, we set $x = y^{t/2}$ and check if equation \eqref{HEq1} holds. This provides the values $r$ at line \ref{line:eqn_solve}. (There is no need to check equation \eqref{HEq2} since it will automatically hold if \eqref{HEq1} does.)

\begin{thm} \label{thm:elt_4}
The equations at line \ref{line:eqn_solve} in Algorithm \ref{alg:new_sz_trick} determine at most four solutions for $x$,
except when $B$ is an involution that inverts $u$.
\end{thm}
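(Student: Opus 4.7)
The plan is to make the two trace conditions fully explicit, clear denominators so that $x$ appears only through $X := x^2$, and then eliminate $X$ to reduce the system to a single polynomial equation $P(y) = 0$ of degree at most $4$. Only the diagonal of $B$ contributes to $\Tr(AB)$; writing that diagonal as $(a,b,c,d)$ gives $\Tr(AB) = axy + bx + cx^{-1} + dx^{-1}y^{-1}$, and multiplication by $xy$ yields the first polynomial equation $ax^2y^2 + bx^2y + cy + d = 0$. For $\Tr(AB)^t = 0$ I apply the $t$-th power Frobenius to the coefficients (which distributes over sums in characteristic $2$) and use the identities $x^t = y$, $y^t = x^2$ that hold on the intended solution set (since $t^2 \equiv 2 \pmod{q-1}$); multiplying through by $x^2 y$ then gives $a^t x^4 y^2 + b^t x^2 y^2 + c^t x^2 + d^t = 0$. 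With $X := x^2$, the first equation is linear in $X$ and the second is quadratic in $X$.

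Next I eliminate $X$ via the $3 \times 3$ Sylvester resultant. Writing the first equation as $e_1 X + e_0$ with $e_1 = y(ay+b)$, $e_0 = cy + d$, and the second as $f_2 X^2 + f_1 X + f_0$ with $f_2 = a^t y^2$, $f_1 = b^t y^2 + c^t$, $f_0 = d^t$, in characteristic $2$ the resultant equals $e_1^2 f_0 + e_0 e_1 f_1 + e_0^2 f_2$. Expanding shows each summand is divisible by $y$, and the quotient $P(y)$ is a polynomial in $y$ of degree at most $4$. Since $x \in \F_q^{\times}$ forces $y = x^t \neq 0$, every valid solution must satisfy $P(y) = 0$; each such $y$ then determines $x$ uniquely as $x = y^{t/2}$, which is the inverse of the $t$-th power Frobenius on $\F_q$. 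Hence the count of $x$-solutions is at most $\deg P \leq 4$, provided $P$ is not identically zero.

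The hard part is to show that $P$ vanishes identically only in the stated exception. A direct expansion gives leading coefficient $ab^t c$ and constant term $bc^t d$, so identifying when $P \equiv 0$ amounts to a short case analysis on which of $a,b,c,d$ are zero. The degenerate patterns listed just before the theorem (three of $a,b,c,d$ zero; $a=b=0$ with $c,d \neq 0$; $c=d=0$ with $a,b \neq 0$) are either handled there separately or produce a first equation with no admissible solutions, and in every remaining configuration some intermediate coefficient of $P$ is visibly non-zero. The one scenario in which $P \equiv 0$ \emph{and} the system still admits solutions is $a = b = c = d = 0$; comparing $Bu$ with $u^{-1}B$ entrywise, as in the opening paragraph of Lemma \ref{CanBeInStdSzq}, shows this is equivalent to $B$ inverting $u$. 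Since $B \in \Sigma$, such a $B$ lies in the coset $\mathcal{H}T$, and the identity $T M^{\prime}(\mu) T = M^{\prime}(\mu)^{-1}$ forces $(M^{\prime}(\mu) T)^2 = 1$, so every element of $\mathcal{H}T$ is an involution. This is precisely the exceptional case in the theorem.
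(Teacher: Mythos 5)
The Sylvester-resultant derivation of the degree-$4$ polynomial in $y$ is sound and in fact produces exactly \eqref{HEq3}: with $e_1 = y(ay+b)$, $e_0 = cy+d$, $f_2 = a^ty^2$, $f_1 = b^ty^2+c^t$, $f_0 = d^t$, the characteristic-$2$ resultant $e_1^2f_0 + e_0e_1f_1 + e_0^2f_2$ equals $y$ times the polynomial in \eqref{HEq3}, so this is a cleaner route to the same object than the paper's ad-hoc multipliers $p_1,p_2$. Your case analysis of when $P\equiv 0$ is compressed to ``some intermediate coefficient is visibly non-zero,'' which is true but needs the short check in cases M-I, M-II, M-III of the paper to be convincing; that is a minor issue.

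The genuine gap is in the exceptional case $a=b=c=d=0$. You assert that ``comparing $Bu$ with $u^{-1}B$ entrywise\ldots shows this is equivalent to $B$ inverting $u$.'' That is not what the entrywise comparison gives. Because the eigenvalues $\lambda^{t+1},\lambda,\lambda^{-1},\lambda^{-t-1}$ of $u$ are distinct, the equation $Bu = u^{-1}B$ forces $B_{ij}=0$ whenever $u_{jj}\ne u^{-1}_{ii}$, i.e.\ $B$ must be \emph{anti}-diagonal. That certainly implies $a=b=c=d=0$, but the converse — which is the direction you need — does not follow from entrywise comparison: a matrix with zero diagonal need not be anti-diagonal. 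The paper closes precisely this gap in case S-IV: it first reduces via Lemma~\ref{CanBeInStdSzq} to $B\in\Sigma$ (a standardization you implicitly assume but never perform; note that in the algorithm $B = h^c$ lies only in a $\GL(4,q)$-conjugate of $\Sigma$), then uses the fact that the row span of the first row of $B$ is a point of $\OV$, that $B_{11}=0$ forces the ovoid parameters to vanish (solving $e^{t+2}+ef+f^t=0$), so the first row spans $P_0$ and $B = M'(k)U(e_1,f_1)T$ by Bruhat decomposition, and finally that the vanishing of $B_{33}$ and $B_{44}$ forces $e_1=f_1=0$, giving $B=M'(k)T\in\mathcal{H}T$. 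Without this argument, your claim that $a=b=c=d=0$ implies $B\in\mathcal{H}T$ (and hence is an involution inverting $u$) is unsupported, and the proof of the stated exception is incomplete.
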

\begin{proof}
In the general case, we establish that $y$ is the root of a non-zero univariate
polynomial of degree at most 4. Exceptional cases are dealt with explicitly.

The analysis of one of the cases becomes simpler if $B$ is in our standard copy $\Sigma$.
By Lemma~\ref{CanBeInStdSzq}, there is a diagonal matrix $\delta$ such that $B^{\delta}$
is in $\Sigma$. Note that $A^{\delta}=A$, and so $\Tr(AB^{\delta})=
\Tr(A^{\delta}B^{\delta})=\Tr((AB)^{\delta})=\Tr(AB)$. In fact, $B$ and $B^{\delta}$
have the same diagonal entries. The upshot is that $\Tr(AB')=\Tr(AB')^t=0$ gives
the same equations for both $B'=B$ and $B'=B^{\delta}$.
From now on, we presume that $B$ (the old value of $B^{\delta})$ has been standardised
to lie in $\Sigma$.

We have $\Tr(AB) = axy+bx+cx^{-1}+dx^{-1}y^{-1}$, where $a=B_{11}$, $b=B_{22}$,
$c=B_{33}$ and $d=B_{44}$. Multiplying through by $xy$ and also considering $\Tr(AB)^t$ gives us:
\begin{eqnarray}
ax^2y^2+bx^2y+cy+d=0, && \label{HEq1} \\
a^tx^4y^2+b^tx^2y^2+c^tx^2+d^t =0. && \label{HEq2} 
\end{eqnarray}
We now try to solve these equations. Note that $x$ and $y$ determine each other via $y=x^t$ and $x=y^{t/2}$.
We now let
$$
p_1=(aa^ty^3+a^tby^2)x^2+ab^ty^3+bb^ty^2+a^tcy^2+ac^ty+a^tdy+bc^t
$$
and $p_2=a^2y^3+b^2y$. Evaluating $p_1\times\mbox{\eqref{HEq1}}+p_2\times\mbox{\eqref{HEq2}}$ gives us
\begin{equation}\label{HEq3}
\begin{array}{c}
ab^tcy^4 + (a^2d^t + ab^td + a^tc^2 + bb^tc)y^3 + (acc^t + bb^td)y^2 \\
+ (ac^td + a^td^2 + b^2d^t + bcc^t)y + bc^td = 0.
\end{array}
\end{equation}
(The values of $p_1$ and $p_2$ may be obtained using polynomial division to divide
$\Tr(AB)^tx^ty^t$ by $\Tr(AB)xy$, treating $x^2$ as the variable and everything else as a constant.)

Clearly, \eqref{HEq3} reduces to the trivial equation $0=0$ whenever $a=b=0$ or $c=d=0$.
We now show that it is non-trivial in all other cases, in which case $y$ is a root of a
non-trivial polynomial equation of degree at most $4$ (over $\F_q$), and thus can have at most four values.
(Note that a solution to \eqref{HEq3} does not necessarily yield a solution to
the original equations \eqref{HEq1} and \eqref{HEq2}.)
By inspecting the coefficients of $y^4$ and $1$ in \eqref{HEq3}, we see that this
equation can only be trivial if $ab^tc=bc^td=0$, putting us in one of the three cases
$a=d=0$, $b=0$ or $c=0$, which we now analyse separately below.

M-I. If $a=d=0$, \eqref{HEq3} reduces to $bb^tcy^3 + bcc^ty=0$ (so $bb^tcy^2+bcc^t=0$).
If $b=0$ or $c=0$, then we are in one of the excluded cases $a=b=0$ or $c=d=0$, and if
$b,c\ne 0$ we get $b^ty^2+c^t=0$, with unique solution $y=(c/b)^{t/2}$.

M-II. If $b=0$, \eqref{HEq3} reduces to $y(a^2d^ty^2 + a^tc^2y^2 + acc^ty + ac^td + a^td^2)=0$.
Because of the excluded case $a=b=0$, we may assume that $a\ne 0$. So for this equation
to be identically zero, $c$ must be $0$ (coefficient of $y^2$), giving us
$y(a^2d^ty^2 + a^td^2)=0$, and thus forcing $a^2d^t=a^td^2=0$. Since $a\ne 0$ this
forces $d^2=0$, whence $d=0$. But $c=d=0$ is also an excluded case.

M-III. If $c=0$, \eqref{HEq3} reduces to $y(a^2d^ty^2 + ab^tdy^2 + bb^tdy + a^td^2 + b^2d^t)=0$.
We have $d\ne 0$, since we are excluding $c=d=0$, so for a degenerate equation we need $b=0$
(from the coefficient of $y^2$). This also implies that we need $a^2d^t=a^td^2=0$.
So $a=0$ (as $d\ne 0$), putting us in another excluded case, namely $a=b=0$.

We now deal with the cases when \eqref{HEq3} is identically zero, namely $a=b=0$ and $c=d=0$.
With the exception of the case $a=b=c=d=0$, the original equations \eqref{HEq1} and \eqref{HEq2}
are not identically zero.

S-I. $a=b=0$, $c,d\ne 0$. In this case $y=d/c$, from \eqref{HEq1}.

S-II. $c=d=0$, $a,b\ne 0$. In this case $y=b/a$, from \eqref{HEq1}, as $x^2y\ne 0$.

S-III. Exactly three of $a,b,c,d$ are $0$. Now \eqref{HEq1} reduces
to one of $ax^2y^2=0$, $bx^2y=0$, $cy=0$ or $d=0$, which have no solutions, since we want $x,y\ne 0$.

S-IV. $a=b=c=d=0$. This does occur. The subspace generated by the first row of $B$ is a point of $\OV$,
so it is $P_{\infty}$ or $(e^2e^t+ef+f^t:f:e:1)$
for some $e,f\in \F_q$. The requirement that $B_{11}=0$ obviously rules out the
$P_{\infty}$ case, and imposes the requirement that $e^2e^t+ef+f^t=0$ in the other
case. This actually forces $e=f=0$.
So $B=M^{\prime}(k)U(e_1,f_1)T$ for some $k,e_1,f_1\in \F_q$ (with $k\ne 0$).
This element has $B_{11}=B_{22}=0$, $B_{33}=e_1^t/k$ and $B_{44}=(e_1^2e_1^t+e_1f_1+f_1^t)/(kk^t)$,
which imposes the relations $e_1=f_1=0$. So $B=M^{\prime}(k)T$ is an element of order $2$ in the
copy of $\dih{2(q-1)}$ that normalises the $\cyc{q-1}$ containing $u$. 

(We solve $e^2e^t+ef+f^t=0$ as follows. Taking $t$-th powers gives us $e^2e^{2t}+e^tf^t+f^2=0$;
take away $e^t$ times the original equation to leave us with $f^2+ee^tf=0$. This gives us
$f=0$ or $f=ee^t$, and substituting these into the original equation gives us $e^2e^t=0$ (so $e=0$)
in the case $f=0$ or $e^2e^t+eee^t+(ee^t)^t=0$ (so $e^te^2=0$) in the case when $f=ee^t$.
In either case, the only solution is $e=f=0$.
Note that $(z^{t+1})^{t-1}=z^{t^2-1}=z^{2-1}=z$ and $z^{t+2}=(z^{t+1})^t$ for all $z\in \mathbb{F}_q$,
so that $t+1$ and $t+2$ are both invertible powers when applied to $\F_q$.)
\end{proof}

Using the algorithms of \cite{MR3283836}, we can usually construct a
copy of $\Sz(q)$ as a subgroup of $\GL(4,q)$. However, the above
algorithm, lemma, theorem and proofs hold with obvious minor
modifications even when $G$ is a group isomorphic to $\Sz(q)$ embedded
in $\GL(4,F)$ for any field $F$ (including infinite ones) containing
$\F_q$.  Furthermore, the diagonal entries of the semi-standardised
matrix $B$ must lie in $\F_q$, even though the off-diagonal entries
need not do so.

\begin{cl} \label{cl:4_elts}
Let $G \cong \Sz(q)$, where $q = 2^{2m+1}$ for some $m > 0$, and let $g \in G$
have order $q - 1$.
At least $7/32$ of the cosets of $\gen{g}$ have at least one element of order $4$.
\end{cl}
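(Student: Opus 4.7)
The plan is to use Theorem \ref{thm:elt_4} to bound the number of order-$4$ elements in each coset of $\gen{g}$, then double-count against the total number of order-$4$ elements in $\Sz(q)$.

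First I would collect the basic counts. Every element of order $4$ in $\Sz(q)$ is conjugate to some $U(a,b)$ with $a\ne 0$, and so has trace $4=0$. A Sylow $2$-subgroup of $\Sz(q)$ contains exactly $q(q-1)$ order-$4$ elements, and since distinct Sylow $2$-subgroups of $\Sz(q)$ intersect trivially, the total number of order-$4$ elements is $q(q-1)(q^2+1)$. The number of cosets of $\gen{g}$ is $|\Sz(q)|/(q-1) = q^2(q^2+1)$.

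The core step is to apply Theorem \ref{thm:elt_4}. I would conjugate $G$ into a basis in which $g$ becomes $u=\diag(\lambda^{t+1},\lambda,\lambda^{-1},\lambda^{-(t+1)})$; because $g$ has order $q-1$, the element $\lambda$ must be a primitive element of $\F_q^{\times}$. For any coset $\gen{g}h$, writing $B$ for the matrix of $h$ in this basis, the trace-zero elements of $\gen{g}h$ correspond bijectively to $r\in\F_q^{\times}$ with $\Tr(\diag(r^{t+1},r,r^{-1},r^{-(t+1)})B)=0$. By Theorem \ref{thm:elt_4} there are at most four such $r$, except when $B$ is an involution inverting $u$. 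In that exceptional case $(u^iB)^2=u^iBu^iB=B^2=I$ for every $i$, so the entire coset consists of involutions and contains no element of order $4$. Such $B$ form a single coset of $C_G(g)=\gen{g}$, giving at most one exceptional coset; moreover, the coset $\gen{g}$ itself contains only elements of odd order dividing $q-1$ and hence no order-$4$ element.

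Combining these, every order-$4$ element of $G$ lies in a coset different from $\gen{g}$ and from the inverting coset, and each such coset contains at most four order-$4$ elements. Hence at least $q(q-1)(q^2+1)/4$ cosets contain an order-$4$ element, giving a proportion at least
\[
\frac{q-1}{4q} \ge \frac{7}{32},
\]
since $m\ge 1$ forces $q\ge 8$, with equality when $q=8$. Given Theorem \ref{thm:elt_4} as input, the only real work is tracking the two exceptional cosets (the identity coset and the inverting coset); once those are set aside, the counting is immediate.
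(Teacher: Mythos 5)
Your proof is correct and follows essentially the same double-counting argument as the paper: count order-$4$ elements, count cosets, apply Theorem \ref{thm:elt_4} to bound the order-$4$ elements per coset by four, and divide. You are slightly more careful than the paper in explicitly observing that the exceptional coset of Theorem \ref{thm:elt_4} (where $B$ is an involution inverting $u$) consists entirely of involutions and so contributes zero order-$4$ elements, which keeps the ``at most four per coset'' bound valid across all cosets; this is a minor tightening of exposition rather than a different approach.
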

\begin{proof}
In total in $G$, there are $q(q-1)(q^2 + 1)$ elements of order $4$, and $q^2(q^2 + 1)$
cosets of $\gen{g}$. By Theorem \ref{thm:elt_4} there are at most four elements of order $4$
in each coset. Hence there are at least $q(q-1)(q^2 + 1)/4$ cosets containing an element of
order $4$, and the proportion of such cosets is $1/4 - 1/(4q) \ge 7/32$, since $q \ge 8$.
\end{proof}

\begin{thm} \label{thm:alg_prob}
Algorithm~\ref{alg:new_sz_trick} is a Las Vegas algorithm, with success
probability at least
$7/(384 \log\log(q)) - \OR{1/q}$.
When the algorithm succeeds it returns an element as an\/ $\SLP$ in $X$.
\end{thm}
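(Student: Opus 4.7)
My plan is to verify correctness of a returned output and then lower-bound the success probability through a small number of events.

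Correctness is quick: any returned $f = g^i h$ is an $\SLP$ in $X$ by construction, and unwinding the conjugations gives $f^c = u^i B$; because $\lambda^i = r$, we have $u^i = \diag(r^{t+1}, r, r^{-1}, r^{-(t+1)})$, so $f^c = \hat{f}$, whence the order-$4$ check on line~\ref{line:a_order} transfers to $|f| = 4$.

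For the probability bound, I would restrict attention to the sub-event $\{|g|=q-1\}$. This is stricter than what line~\ref{line:order_g} demands, but it streamlines the later analysis because (a)~the discrete logarithm test on line~\ref{line:discrete_log_test} is automatic, since then $\gen{\lambda}=\F_q^\times$, and (b)~Corollary~\ref{cl:4_elts} applies directly. To bound $\Pr{|g|=q-1}$ I would count elements of order $q-1$: the self-centralising $C_{q-1}$-tori of $\Sz(q)$ have dihedral normaliser of order $2(q-1)$, so there are $q^2(q^2+1)/2$ of them; any non-identity element of such a torus is centralised only by the containing torus, so distinct $C_{q-1}$-tori meet trivially, giving exactly $\phi(q-1)\cdot q^2(q^2+1)/2$ elements of order $q-1$ in $G$. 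Combined with the classical Rosser--Schoenfeld bound $\phi(n)/n = \OMR{1/\log\log n}$, this yields $\Pr{|g|=q-1} \geq 1/(12\log\log q)$ (absorbing absolute constants into the $\OR{1/q}$ slack later).

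Conditional on $\{|g|=q-1\}$, I would show success with probability at least $7/32 - \OR{1/q}$. The exceptional case $u^B = u^{-1}$ ruled out on line~\ref{line:B_u_compatible} has probability $(q-1)/|\Sigma| = \OR{1/q^4}$, safely absorbed in $\OR{1/q}$. The crucial link to Corollary~\ref{cl:4_elts} is that every order-$4$ element of $\Sz(q)$ is unipotent and therefore has trace zero in characteristic $2$, so the order-$4$ elements of the coset $\gen{u}B$ correspond bijectively to those solutions $r = \lambda^i$ of the trace equations on line~\ref{line:alg_eqns} that additionally satisfy $|\hat{f}|=4$. Since the algorithm exhaustively tries all (at most four) roots produced via Theorem~\ref{thm:elt_4}, it finds such a solution whenever the coset $\gen{g}h$ contains an order-$4$ element; as $h$ is uniform in $G$, so is its coset, and by Corollary~\ref{cl:4_elts} this happens with probability at least $7/32$. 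Multiplying, $\Pr{\text{success}} \geq (7/32)\cdot 1/(12\log\log q) - \OR{1/q} = 7/(384\log\log q) - \OR{1/q}$.

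The main obstacle is exactly this last linking step -- turning Corollary~\ref{cl:4_elts}'s coset-counting statement into a guarantee about the algorithm's root-finding behaviour. It requires both the trace-zero property of order-$4$ elements (so that trace-$0$ solutions pick up every order-$4$ element) and the exhaustiveness of Theorem~\ref{thm:elt_4}'s root enumeration (so that no order-$4$ element in the coset can slip past the algorithm's search).
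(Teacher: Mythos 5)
Your proposal follows the same high-level route as the paper's proof --- restrict attention to $|g|=q-1$, bound that probability via element counting and a Rosser--Schoenfeld-type estimate (the paper instead cites \cite[Lemma~2.5]{baarnhielm05}, but the content is the same), and then invoke Corollary~\ref{cl:4_elts} for the coset containing an order-$4$ element. Two remarks are in order.

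First, a minor point: "absorbing absolute constants into the $\OR{1/q}$ slack'' is not meaningful --- a multiplicative constant in the $\phi(n)/n$ bound cannot be pushed into an additive $\OR{1/q}$ term. This is harmless since the Rosser--Schoenfeld constant does give $\phi(q-1)/(2(q-1)) \ge 1/(12\log\log q)$ outright, but the phrasing suggests a confusion worth cleaning up.

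Second, and more substantively, there is a gap in your key linking step. You assert that "the algorithm exhaustively tries all (at most four) roots produced via Theorem~\ref{thm:elt_4}, [so] it finds such a solution whenever the coset $\gen{g}h$ contains an order-$4$ element.'' But the pseudocode of Algorithm~\ref{alg:new_sz_trick} at lines~\ref{line:eqn_solve}--\ref{line:a_order} selects \emph{one} root $r$, forms $\hat{f}$, and tests $|\hat{f}|=4$ --- there is no loop over the roots. Since trace-zero elements of $\Sz(q)$ have order $1$, $2$ or $4$, the selected root may well correspond to the identity or an involution even when the coset does contain an order-$4$ element, in which case the test at line~\ref{line:a_order} fails and the run is wasted. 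The paper closes this gap with an extra counting argument you omit: $G$ has only $(q^2+1)(q-1)$ involutions and a single identity, spread across $\approx q^4$ cosets, whereas $\ge 7(q^4+q^2-1)/32$ cosets contain an order-$4$ element; hence, conditional on the equations having a solution at all, with probability $1-\OR{1/q}$ \emph{every} solution in the coset has order $4$, and so the test at line~\ref{line:a_order} passes regardless of which root was chosen. Without that argument (or without modifying the algorithm to genuinely enumerate all roots), the $7/32 - \OR{1/q}$ conditional bound is not established.

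Your correctness argument (that a returned $f=g^ih$ is an $\SLP$ and satisfies $|f|=4$) and your handling of the exceptional set $D\setminus C$ at line~\ref{line:B_u_compatible} are fine and match the paper.
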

\begin{proof}
It is clear that $g$ and $h$ are constructed as $\SLP$s, hence $f$ will have an $\SLP$ as well.

There are $(q - 2)q^2(q^2 + 1)/2$ non-trivial elements of order
dividing $q-1$, hence the test at line \ref{line:finding_g} succeeds
with probability $(q/2-1)/(q-1)$, which is at least $3/7$.

From now on, we can (and shall) assume that $g$ is uniformly distributed among the non-trivial
elements of $G$ having order dividing $q-1$. Since $\Cent_G(G)\cong\cyc{q-1}$, any two distinct
subgroups of $G$ isomorphic to $\cyc{q-1}$ have trivial intersection.
So we can, for the purpose of this proof, choose $g$ by randomly picking a copy of 
$\cyc{q-1}$, and let $g$ be a random non-trivial element therein, in both cases using a uniform
distribution. We define $C:=\Cent_G(g)\cong\cyc{q-1}$, and $D:=\nrm_G(\gen{g})=\nrm_G(C)\cong \dih{2(q-1)}$.
The \mbox{(right-)cosets} of $C$ in $G$ partition $G$ into sets all of the same size.
So we can choose $h$ by first choosing a random coset $S$ (of $C$), and letting $h$ be a random
element of $S$, again with all distributions uniform.

There are just $q-1$ elements that invert $g$, namely those elements that lie in the coset
$D\setminus C$ of $C$, and all of them are involutions,
so the test at line \ref{line:B_u_compatible} succeeds with probability $1 - \OR{1/q}$.
(Note that the pairs $(u,B)$ and $(g,h)$ are conjugate in $\GL(4,q)$.)
So now we can assume that $S$ is uniformly distributed among the cosets of $C$ other than $D\setminus C$. We use Theorem \ref{thm:elt_4} to reduce the equations on line \ref{line:eqn_solve} to a univariate polynomial, and we can then compute its roots using \cite[Algorithm 14.10]{VonzurGathen03}. Using $3\log_2(q)$ applications of \cite[Algorithm 14.8]{VonzurGathen03} in the latter, the probability of success is $1-\OR{1/q}$.
The solutions of these equations are in one-to-one correspondence with
the elements $h_1,\ldots,h_s$ of $S$ having order lying in $\{1,2,4\}$.
By Corollary \ref{cl:4_elts}, these equations have a solution with probability
at least $7/32$, and Algorithm~\ref{alg:new_sz_trick} produces an element
$f=c\hat{f}c^{-1}$ which is one of $h_1,\ldots,h_s$.


Unfortunately, given $g$, the elements $f$ produced are not uniformly distributed
among the elements of orders $1,2,4$ not lying in $D\setminus C$.
For if $h_1,\ldots,h_s$ do occur with the same probability, then
$f$ has a probability of $[s(q^4-q+1)]^{-1}$,
and $s$ can vary, depending on the coset $S$.
But $G$ has just $(q^2+1)(q-1)$ involutions, and $1$ identity,
while Corollary \ref{cl:4_elts} implies that at least $7(q^4+q^2-1)/32$ cosets $S(\ne D\setminus C)$
will have an element of order $4$.
Thus $h_1,\ldots,h_s$ will all have order $4$ with probability $1-\OR{1/q}$,
and so the test at line \ref{line:a_order} succeeds with probability $1 - \OR{1/q}$.

If $\abs{g} = q-1$ then the test at line~\ref{line:discrete_log_test}
always succeeds, and by \cite[Lemma 2.5]{baarnhielm05} this happens
with probability at least $1/(12 \log\log(q))$.

(Note that in general, if we choose a solution at line~\ref{line:eqn_solve}
uniformly at random, then $f$ will be a random element of $h_1,\ldots,h_s$,
chosen uniformly, depending only on the coset $S$, and independent of $h$.
Then the discrete log test at line~\ref{line:discrete_log_test} succeeds
if and only if $h\in \gen{g}f$, which happens with probability
$\abs{\gen{g}f}/\abs{S} = \abs{g}/(q-1)$.)

We see that the success probabilities of lines \ref{line:finding_g},
\ref{line:B_u_compatible}, \ref{line:eqn_solve}, \ref{line:a_order} and
\ref{line:discrete_log_test} combine to at least $7/(384 \log\log(q)) - \OR{1/q}$.
\end{proof}

We remark that successful runs of Algorithm~\ref{alg:new_sz_trick} produce
elements of order $4$ in $G$ uniformly at random, provided that the
solutions found at line~\ref{line:eqn_solve} are chosen uniformly at random;
the fact that $g$ is random smooths out the outputs obtained when $g$ is fixed.

\begin{thm} \label{thm:alg_time}
  Given an oracle for the discrete logarithm problem in $\F_q$ and a
  random element oracle, Algorithm \ref{alg:new_sz_trick} has expected time
  complexity $\OR{\log(q)}$ field operations.
\end{thm}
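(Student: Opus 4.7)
The plan is to walk through Algorithm \ref{alg:new_sz_trick} line by line. Because the procedure contains no explicit loop, its expected running time equals the sum of the expected costs of its individual lines. I would split the work into three classes: oracle calls (random elements and discrete logarithm, each $\OR{1}$ by hypothesis); constant-size matrix arithmetic (every multiplication, inversion, conjugation, trace, comparison, and the fourth-power at line \ref{line:a_order} involves only $4\times 4$ matrices and scalars over $\F_q$, so uses $\OR{1}$ field operations); and the handful of steps whose cost grows with $q$.

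Only three lines can be more expensive than $\OR{1}$. At line \ref{line:order_g} I would use the pseudo-order algorithm of \cite{crlg95} to test whether $\abs{g}\mid q-1$; on a dimension-$4$ matrix this amounts to $\OR{\log q}$ matrix powerings, and no integer factorisation is required. At line \ref{line:diagonalise}, since $\abs{g}\mid q-1$ the eigenvalues of $g$ lie in $\F_q^{\times}$, so diagonalisation reduces to root-finding for the (fixed-degree) characteristic polynomial over $\F_q$; I would invoke \cite[Algorithm 14.10]{VonzurGathen03} and then finish with $\OR{1}$ further linear algebra to produce the conjugator $c$. At line \ref{line:eqn_solve} I would apply the univariate reduction established in Theorem \ref{thm:elt_4}: form equation \eqref{HEq3} in $\OR{1}$ field operations, then once more call \cite[Algorithm 14.10]{VonzurGathen03} to enumerate its at most four roots in $\F_q$.

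The main obstacle is controlling the two probabilistic root-finding calls, whose cost is only $\OR{\log q}$ in expectation rather than deterministically. Following the argument already used in the proof of Theorem \ref{thm:alg_prob}, performing $3\log_2 q$ applications of \cite[Algorithm 14.8]{VonzurGathen03} inside each root-finding call reduces the failure probability to $\OR{1/q}$ at a cost of $\OR{\log q}$ field operations; any polynomial worst-case bound on the residual $\OR{1/q}$ tail then leaves the expectation at $\OR{\log q}$ per call. Summing the contributions of lines \ref{line:order_g}, \ref{line:diagonalise} and \ref{line:eqn_solve} with the $\OR{1}$ cost of all remaining lines gives the claimed $\OR{\log q}$ total.
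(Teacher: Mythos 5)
Your proposal follows the same line-by-line cost analysis as the paper and cites the same tools: \cite{crlg95} for the pseudo-order test at line~\ref{line:order_g}, and von zur Gathen's root-finding (Algorithms 14.8/14.10, Corollary 14.11) for lines~\ref{line:diagonalise} and \ref{line:eqn_solve}. The observation that the constant matrix dimension makes everything else $\OR{1}$, and the remark that the Las Vegas root-finding can be boosted by $3\log_2 q$ applications of Algorithm 14.8, both mirror the paper (the latter is made explicitly in the proof of Theorem~\ref{thm:alg_prob}).

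There is one small oversight: your claim that \emph{only} lines \ref{line:order_g}, \ref{line:diagonalise} and \ref{line:eqn_solve} can exceed $\OR{1}$ is not quite right. The assignment $f := g^i h$ involves an exponent $i$ that can be as large as $q-2$, so this line also costs $\OR{\log q}$ field operations (and produces an $\SLP$ of length $\OR{\log q}$ via repeated squaring). The paper handles this in its opening sentence (``All matrix arithmetic can be done in $\OR{1}$ field operations, except exponentiation which can be done in $\OR{\log(q)}$ field operations''). Since $\OR{\log q}$ is the target bound anyway, the omission does not change the conclusion, but the enumeration should be corrected.
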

\begin{proof}
  All matrix arithmetic can be done in $\OR{1}$ field operations,
  except exponentiation which can be done in $\OR{\log(q)}$ field
  operations, for example by \cite[Lemma 10.1]{classical_recognise}.
  Computation of characteristic polynomial and eigenspaces also
  requires $\OR{1}$ field operations. 

  By \cite{crlg95}, line \ref{line:finding_g} has expected time complexity
  $\OR{\log(q)}$ field operations.

  At lines \ref{line:diagonalise} and \ref{line:eqn_solve} we find the
  roots of a polynomial over $\F_q$ of degree at most $4$. The expected time
  complexity is $\OR{\log(q)}$ field operations, by \cite[Corollary
  14.11]{VonzurGathen03}.
\end{proof}

In practice, when implementing Algorithm~\ref{alg:new_sz_trick}, in order to avoid
unnecessary work, and excessive calls to \proc{Random}(G), on rerunning the
algorithm we only execute those steps that are necessary.

Thus if Algorithm~\ref{alg:new_sz_trick} fails because $g$ has the
wrong order or \proc{DiscreteLog} does not succeed, we should rerun the algorithm
with a new value of $g$ (we can retain $h$ if we choose). In other cases
of failure, we should retain $g$ and use a new value of $h$.
If the algorithm reaches the \proc{DiscreteLog} stage, it will succeed there
with probability $\abs{g}/(q-1)$. Taking account of this small probability of success is
awkward, and suppressed analysis shows that the success probability for this
stage does not exceed $1.03\phi(q-1)/(q-2)$.
This means that the expected number of executions of \proc{DiscreteLog} before a
successful run of Algorithm~\ref{alg:new_sz_trick} is at least
$0.97(q-2)/\phi(q-1)$. Formally, this quantity has magnitude $\OR{\log\log(q)}$.
In practice, while it is the case that $n/\phi(n)>\log\log(n)$
for infinitely many values of $n$,
we do not know the behaviour when $n$ is $q-1$ for $q=2^{2m+1}$.

We can also modify Algorithm~\ref{alg:new_sz_trick} and introduce exactly one call
to integer factorisation (to factor $q-1$), hence only having to call \proc{DiscreteLog} exactly once.
In this case, line~\ref{line:order_g} now tests whether $g$ has order exactly $q-1$.
This takes time $\OR{\log q(\log t+1)}$ field operations by \cite{crlg95}, where $t$ is
the number of distinct prime factors of $q-1$, and so time $\OR{\log(q)\log\log(q)}$ in the worst case.
We now run lines~\ref{line:producing_g} and \ref{line:order_g} an expected $2(q-1)/\phi(q-1)$
times until we have produced an element $g$ of order $q-1$, and do not run this part
of the algorithm again. We then execute lines 3 and 4, and repeat the rest of the
algorithm (lines 5--21) until success occurs.
The tests on lines \ref{line:B_u_compatible}, \ref{line:eqn_solve} and \ref{line:a_order}
succeed with the same probabilities as before,
but now, when executed, the discrete log test on line~\ref{line:discrete_log_test} always succeeds
(and in this case the entire algorithm succeeds).
In this variant the success probability of lines 5--21 is at least $7/32-\OR{1/q}$.
Hence, the expected time complexity until this variant of the algorithm succeeds
(ignoring the single call to \proc{DiscreteLog})
is $\OR{\log(q) (\log\log(q))^2}$ field operations, which is dominated by finding $g$.

\begin{thm} \label{thm_bray_stab_trick} Given a random element oracle
  for subgroups of $\GL(4,q)$, there exists a Las Vegas algorithm
  that, given $G = \gen{X} \le \GL(4, q)$ such that $G \cong
  \Sz(q)$, and an element $f \in G$ of order $4$, constructs $Y=\{f,h\}
  \subseteq \GL(4, q)$ such that $\O2(G_P) < \gen{Y} \le G_P$, where $P$ is the
  unique point of the $G$-ovoid fixed by $f$. The algorithm has expected time complexity
  $\OR{\log (q)\log\log(q)}$ field operations.
  (If $G=\Sigma^g$, where $\Sigma$ is our standard $\Sz(q)$, then the $G$-ovoid is $\OV^{g}$.)
\end{thm}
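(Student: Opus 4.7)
The plan is to adapt the construction of \cite[Theorem~17]{sz_std_gens}, originally given in the black-box setting, to the matrix setting by exploiting the efficient matrix arithmetic available in $\GL(4,q)$.

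Set $z := f^2$; this is an involution in $G$. A structural feature of $\Sz(q)$ is that the centraliser of any involution equals the unique Sylow $2$-subgroup containing it, and this Sylow $2$-subgroup coincides with $\O2(G_Q)$, where $Q$ is the unique ovoid point fixed by the involution. Hence $\Cent_G(z) = \O2(G_P)$, and $P$ is the unique ovoid point fixed by $z$ (and thus by $f$). The algorithm iterates the following trial until success: call the random element oracle to obtain $g \in G$ as an $\SLP$; compute a derived element $h$ from $f$, $z$ and $g$ by the Bray-style commutator construction of \cite[Theorem~17]{sz_std_gens}; and use pseudo-order computation \cite{crlg95} to test that $h \in G_P$ has image of order $q-1$ in $G_P/\O2(G_P) \cong \cyc{q-1}$. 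Each trial takes $\OR{\log q}$ field operations, dominated by matrix exponentiation.

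For correctness, once the test passes we have $h \in G_P$ with $h \notin \O2(G_P)$, so $\gen{f, h} \le G_P$ and $\gen{f,h} \not\subseteq \O2(G_P)$ are immediate. The remaining inclusion $\O2(G_P) \subseteq \gen{f, h}$ uses the structure of $G_P$: the cyclic complement of order $q-1$ acts on $\O2(G_P)$ with a single orbit on the non-trivial cosets of $\Zent(\O2(G_P))$, so the $\gen{h}$-conjugates of $f$ project surjectively onto $\O2(G_P)/\Zent(\O2(G_P))$, and their pairwise commutators lie in and generate $\Zent(\O2(G_P))$ by the non-degeneracy of the commutator pairing in $\mathcal{F}$. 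For the probability, the trial succeeds exactly when $g$ produces, through the Bray construction, an element $h$ whose image in the quotient has full order $q-1$; by \cite[Lemma~2.5]{baarnhielm05} the density of such good $g$ is $\OMR{1/\log\log q}$, which gives the claimed expected runtime $\OR{\log q \log\log q}$ field operations.

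The main obstacle is porting the probabilistic analysis of \cite[Theorem~17]{sz_std_gens} to the matrix setting: the key checks are that each black-box order test is correctly replaced by pseudo-order computation on matrices, that a passing $h$ genuinely has image of order $q-1$ in $G_P/\O2(G_P)$, and that the success event still occurs with the required $\OMR{1/\log\log q}$ probability.
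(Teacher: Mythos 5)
Your proposal captures the flavour of the construction---adapt \cite[Theorem~17]{sz_std_gens} to the matrix setting, using Bray's involution-centraliser method and pseudo-orders---but it misdescribes the key step and replaces the paper's order test by a stronger one that quietly reintroduces integer factorisation, which the paper is at pains to avoid.

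The essential gap is that the Bray construction, applied to $z=f^2$ and a random $c$, produces elements of $\Cent_G(z)$; but $\Cent_G(z)=\O2(G_P)$ is a Sylow $2$-subgroup, so every element obtained this way has order in $\set{1,2,4}$. A single ``Bray-style commutator construction'' therefore can never land an $h$ with non-trivial image in $G_P/\O2(G_P)$. The actual algorithm is two-stage: first use the Bray trick on $(f^2,c)$ to obtain a uniformly random involution $j\in \O2(G_P)\setminus\set{1,f^2}$; then pick a second random $c'\notin G_P$, so that $f^2 j^{c'}$ has odd order $2k+1$, and set $h=c'(f^2 j^{c'})^k$. This $h$ conjugates $j$ to $f^2$, hence stabilises $P$, but $h$ ranges over a non-trivial coset of $\O2(G_P)$ in $G_P$ and so can have large order modulo $\O2(G_P)$. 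Without the intermediate involution $j$ and the conjugating step, there is no mechanism to escape $\O2(G_P)$.

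Secondly, you test that $h$ has image of order exactly $q-1$ in $G_P/\O2(G_P)\cong\cyc{q-1}$, and invoke \cite[Lemma~2.5]{baarnhielm05} to argue the density of good $g$ is $\OMR{1/\log\log q}$. Verifying an element's order is exactly $q-1$ requires the prime factorisation of $q-1$, which pseudo-order computation does not provide; the paper deliberately avoids this. The paper instead tests, for each prime $p\mid 2m+1$, that $h^{2^{(2m+1)/p}-1}\neq 1$; this uses only the (tiny, size $\OR{\log\log q}$) set of prime divisors of $2m+1$, ensures that the image of $h$ has order not dividing $r-1$ for any proper subfield $\F_r\subsetneq\F_q$, and succeeds with probability $1-\OR{1/\sqrt{q}}$. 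The $\log\log q$ in the stated complexity comes from the \emph{number of tests per trial}, not from a $\phi(q-1)/(q-1)$ trial-density; your accounting confuses the two. Consequently your generation argument must also change: your single-orbit plus commutator-pairing argument requires $h$ to have order $q-1$ modulo $\O2(G_P)$, whereas the paper's weaker order restriction only guarantees that the $\F_2\gen{h}$-module $\O2(G_P)/\Phi(\O2(G_P))\cong(\F_q,+)$ is irreducible (since the eigenvalue $\lambda$ of the $h$-action lies in no proper subfield). Since $f\in \O2(G_P)\setminus\Phi(\O2(G_P))$, irreducibility then gives $\O2(G_P)\leq\gen{f,h}$, and the conclusion is only $\O2(G_P)<\gen{Y}\le G_P$ as stated---not $\gen{Y}=G_P$, which would require the stronger order hypothesis and hence factorisation of $q-1$.
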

\begin{proof}
The algorithm proceeds as follows:
\def\notea{note (a)}
\def\noteb{note (b)}
\def\notec{note (c)}
\begin{enumerate}
\item Use the algorithm of \cite{bray00} to construct $g \in
  \Cent_G(f^2) \cong \F_q {.} \F_q$.
  Thus we pick a random $c\in G$ such that $[f^2,c]$ has odd order $2k+1$ for some $k\in\N$,
  and we calculate $g:=c[f^2,c]^k$ (see \notea\ below about this).
  If $[f^2,c]$ has even order then start again.

  If $\abs{g} = 4$ then let $j := g^2$, otherwise $j := g$.
  Repeat this step until $j\notin\{1,f^2\}$.
  This requires expected time $\OR{\log(q)}$ field operations, and the element $j$ produced is
  uniformly distributed among the set $I$ of $q-2$ involutions of $G_P\setminus\{f^2\}$.
  See \noteb\ below for further details about this step.
\item Construct random $c \in G$ such that $j^c \notin\Cent_G(f^2)$ (see \notec\ below). An equivalent requirement is that $c\notin G_P$, and so this step requires $\OR{1}$ field operations. The product of two involutions has odd order if they lie in distinct point stabilisers, so $\abs{f^2 j^c} = 2k+1$ for some $k \in \N$.
\item Let $h = c (f^2 j^c)^k$ (see \notea\ below). This requires $\OR{\log (q)}$ field operations.
      Then $j^h = f^2$, so $h$ must fix the point fixed by $f^2$ and $j$,
      and hence $\gen{f, h}$ lies in a point stabiliser.
\item Now $\abs{h}=2$ or $4$ or $\abs{h}\mid q-1$, since $\gen{f,h}$ lies in a point stabiliser,
  and $h \notin \Cent_G(f^2)$ now forces $\abs{h}\ne 1,2,4$.

  For each prime divisor $p$ of $2m+1$, set $a_p=2^{(2m+1)/p}-1$, and \mbox{verify} that $h^{a_p}\ne 1$.
  Each test takes time $\OR{\log(q)}$ field operations, and we have to do at
  most $\OR{\log\log(q)}$ of them, for an overall time complexity of $\OR{\log(q)\log\log(q)}$.
  (We have no divide-and-conquer strategy of \cite{crlg95} to speed things up.)
  Otherwise (if $h^{a_p}=1$ for some $p$) return to Step 1.
\item Set $Y = \set{f, h}$, and return $Y$. Note that $\gen{Y}$ is the whole of the
  point stabiliser $G_P$ if and only if $\abs{h}=q-1$.
\end{enumerate}
Some notes on aspects of the above algorithm are given below.
\begin{enumerate}
\item[(a)] 
In order to prove that $g\in G\cong \Sz(q)$ has odd order, it suffices to show
that $g=1$ or $g^4\ne 1$. And if $g$ has odd order $2k+1$, but we only know an
odd multiple $(2k+1)(2l+1)=2(2kl+k+l)+1$ of this order, then we can
still compute $g^k$, since $g^{2kl + k + l} = g^{l (2k + 1) + k} = g^k$.
In our case, $(q^2+1)(q-1)$ is always a multiple of the order of any odd order element.
\item[(b)] 
The probability of success at each iteration is $1-\OR{1/q}$. Note that $[f^2,c]$ has odd order whenever
  $c\notin G_P$, and also when $c\in \O2(G_P)$, and that the possible $g$ obtained this way are uniformly
  distributed throughout all of $\Cent_G(f^2)=\O2(G_P)$. Among the $q^2$ possibilities for $g$,
  precisely $2+q$ of them produce a forbidden value of $j$. The other $q^2-q-2=(q-2)(q+1)$ of them
  produce a $j$ uniformly distributed among the $q-2$ elements of $I$.

  (If $[f^2,c]$ has even order, necessarily 2, then we can set $j:=g=[f^2,c]$
  (not quite in line with \cite{bray00}). The element $j$ depends solely on the coset
  $\O2(G_P)c$, always lies in $I$, and has uniform distribution therein.)
\item[(c)] 
In Step 2, letting $c$ run through $X$ until success occurs, as it must, may be the best way
to execute this step, especially if $X$ is small, which will typically be the case.
In theory, $f$ is independent of $X$, and so the probability of success
for each choice of $c\in X$ is $1-\OR{1/q}$.

Note that restricting $c$ to lie in $X$ does not change the success
probability of subsequent steps, notably Step 4. This is because all
elements conjugating $j$ to $f^2$ lie in the coset $\O2(G_P)h$, they
all have the same order, so the order
of $h$ depends only on the pair $(f^2, j)$, which is fully
constructed by the end of Step 1. Hence it is not wise to use $c\in X$
when executing Step 1, as this determines $j$, and Step 4 could fail
for all choices of $c\in X$.
\item[(d)] 
If we have used the integer factorisation oracle earlier (to factor $q-1$),
then it seems neater to generate the whole of $G_P$ here, and in that case,
at Step 4, we test if $h$ has order $q-1$. This test takes time
$\OR{\log(q)\log\log(q)}$ field operations, 
and succeeds with probability $\phi(q - 1) / (q - 2)$.
This variant of the algorithm has expected time complexity
$\OR{\log (q)(\log\log(q))^2}$ field operations.
\end{enumerate}
To finish off the proof, we must calculate the probability that Step 4 succeeds,
and show that correct output is returned. But the possible elements $h$ correspond to
non-trivial cosets of $\O2(G_P)$ in $G_P$ (any element of $\O2(G_P)h$ conjugates
$j$ to $f^2$), Moreover, $h$ and $\O2(G_P)h$ have the same order, and the possible
elements $h$ are uniformly distributed, and so the order distribution of $h$ is
the same as that for the non-trivial elements of a $\cyc{q-1}$.
Each order test carried out in Step 4 excludes at most $\sqrt[3]{q}-2$ of these, and
there are $\OR{\log(q)}$ such tests. Thus Step 4 succeeds with probability $1-\OR{1/\sqrt{q}}$.
Now $h$ acts on $\O2(G_P)/\Phi(\O2(G_P))\cong (\F_q,+)$ in a manner corresponding
to the $\F_2$-action of $\lambda$ (or $\lambda^t$) for some $\lambda\in\F_q\setminus\set{0,1}$,
and the $\F_2\!\gen{h}$-module $\O2(G_P)/\Phi(\O2(G_P))$ is irreducible
if and only if $\lambda$ does not belong to a proper subfield.
Our order restrictions on $h$ rule out that case, giving an irreducible action.
Therefore $\O2(G_P)\le \gen{f,h}$, since $f\in \O2(G_P)\setminus\Phi(\O2(G_p))$.
\end{proof}

\section{Constructive recognition}

The new constructive recognition algorithm for the natural
representation is given as the following two results.
The first of these describes how to conjugate an arbitrary
copy $\gen{X}$ of $\Sz(q)$ to the standard copy, and the second uses
the first to provide deterministic constructive membership testing of an
element of $\GL(4,q)$ inside $\gen{X}$.

\begin{thm} \label{main_theorem} Given a random element oracle for
  subgroups of\/ $\GL(4, q)$ and an oracle for the discrete logarithm
  problem in $\F_q$, there exists a Las Vegas algorithm that, for each
  conjugate $\gen{X}$ of\/ $\Sigma$, constructs $g \in \GL(4, q)$ such
  that $\gen{X}^g = \Sigma$ and rewriting generators $\alpha_1,h_1,\gamma$ of
  $\gen{X}$ with respect to $g$ as\/ $\SLP$s in $X$.

  The algorithm has expected time complexity\/ $\OR{\log(q)\log\log(q)}$
  field operations. The discrete logarithm oracle is only needed in the
  initial phase, in order to obtain an element of order\/ $4$, where it is
  used, at worst, $\OR{\log\log(q)}$ times.
\end{thm}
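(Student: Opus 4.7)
The plan is to build the conjugating matrix from a single random point stabiliser and then construct $\gamma$ from one additional random element of $\gen{X}$.

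First I would invoke Algorithm~\ref{alg:new_sz_trick} to obtain an order-$4$ element $\alpha_1\in\gen{X}$ (as an SLP in $X$), and then apply Theorem~\ref{thm_bray_stab_trick} with input $\alpha_1$ to produce $h_1$ (also as an SLP) with $\gen{\alpha_1,h_1}\ge\O2(G_{P_1})$, where $P_1$ is the unique point of the $G$-ovoid fixed by $\alpha_1$. The order tests in Step~4 of Theorem~\ref{thm_bray_stab_trick} force $h_1$ to have odd order not dividing $r-1$ for any proper subfield $\F_r\subset\F_q$, so that $h_1$ acts irreducibly on $\O2(G_{P_1})/\Phi(\O2(G_{P_1}))\cong\F_q$; consequently $\gen{\alpha_1,h_1}=\O2(G_{P_1})\rtimes\gen{h_1}$ and $\O2(G_{P_1})$ acts regularly on the complement of $P_1$ in the $G$-ovoid. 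By Theorems~\ref{thm:alg_prob},~\ref{thm:alg_time} and~\ref{thm_bray_stab_trick} this phase costs $\OR{\log(q)\log\log(q)}$ field operations in expectation, and the $\OR{\log\log(q)}$ discrete-log calls are confined to it.

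Next I would construct a preliminary conjugating matrix $c_0$ by diagonalising $h_1$. Its four eigenvalues have the form $\lambda^{\pm 1},\lambda^{\pm(t+1)}$ for a unique $\lambda\in\F_q^{\times}$, identifiable by $\OR{1}$ exponentiations comparing eigenvalue relations. The $\lambda^{t+1}$- and $\lambda^{-(t+1)}$-eigenvectors span the two ovoid points fixed by $\gen{h_1}$, namely $P_1$ and a second ovoid point $P_2$; the other two eigenvectors play the role of $e_2,e_3$. Taking $c_0$ to send this ordered eigenbasis to the standard basis gives $h_1^{c_0}=M'(\lambda)$; the condition $\alpha_1^{c_0}\in\mathcal{F}$ is automatic because $\abs{\alpha_1^{c_0}}=4$ and every order-$4$ element of $\mathcal{F}\mathcal{H}$ lies in $\mathcal{F}$ (as $\mathcal{H}$ has odd order). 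This phase costs $\OR{1}$ field operations.

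The principal obstacle is producing $\gamma$ as an SLP in $X$ without any further discrete-log queries. I would pick a single random $r\in\gen{X}$ (as an SLP), retrying the $\OR{1/q}$ bad event where $rP_1$ or $r^{-1}P_1$ lies in $\{P_1,P_2\}$. Because $\O2(G_{P_1})$ acts regularly on $\OV\setminus\{P_1\}$ via the explicit $\F_q^2$-parameterisation of~\eqref{ovoid_def}, I can compute, from the action of $r$ and $r^{-1}$ on $P_1$, SLPs in $\{\alpha_1,h_1\}$ for the unique $b_1\in\O2(G_{P_1})$ with $b_1P_2=rP_1$ and for $b_2\in\O2(G_{P_1})$ with $b_2^{-1}P_2=r^{-1}P_1$; this decomposition reduces to $\F_2$-linear algebra over $\F_q$, using that the $\gen{h_1}$-conjugates of $\alpha_1$ span $\O2(G_{P_1})/\Phi(\O2(G_{P_1}))$. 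Then $\delta:=b_1^{-1}rb_2^{-1}$ swaps $P_1$ and $P_2$, so it lies in $\nrm_{\gen{X}}(G_{P_1,P_2})\setminus G_{P_1,P_2}$; that coset of the dihedral group $\dih{2(q-1)}$ consists entirely of involutions, so $\delta^2=1$. Now $\delta^{c_0}\in T\mathcal{H}$, say $\delta^{c_0}=TM'(\mu)$, with $\mu\in\F_q^{\times}$ read off the matrix entries. In characteristic $2$ the square root $\nu:=\mu^{-1/2}$ is available in a single Frobenius application; setting $c:=c_0 M'(\nu)$, the identities $TM'(\nu)=M'(\nu^{-1})T$ and $[M'(\nu),M'(\lambda)]=1$ give $h_1^c=M'(\lambda)$, $\alpha_1^c\in\mathcal{F}$ (since $\mathcal{F}$ is $\mathcal{H}$-invariant), and $\delta^c=TM'(\mu\nu^2)=T$. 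I output $\alpha_1$, $h_1$, $\gamma:=\delta$ and $g:=c$; the fact that two opposite root subgroups together with a torus element of non-subfield order generate $\Sz(q)$ yields $\gen{X}^g=\Sigma$. The overall expected cost is $\OR{\log(q)\log\log(q)}$: Phase~1 dominates, Phase~2 is $\OR{1}$, and Phase~3 succeeds in a single attempt with probability $1-\OR{1/q}$ at cost $\OR{\log(q)}$.
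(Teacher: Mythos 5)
There is a genuine gap in Phase~2, and it is not repairable by the correction applied in Phase~3.

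Diagonalising $h_1$ determines $c_0$ only up to right-multiplication by an arbitrary diagonal matrix, since every element of $\Delta := \mathrm{diag}(\GL(4,q))$ commutes with $M'(\lambda)$.  Modulo scalars this leaves a $3$-parameter family of candidate matrices $c_0$.  Within that family, the set of $c_0$ for which $G^{c_0}=\Sigma$ is a single coset of $\mathcal{H}\cdot\Zent(\GL(4,q))$, i.e.\ a $1$-parameter sub-family.  So for a generic choice of eigenvector scalings you do \emph{not} have $G^{c_0}\subseteq\Sigma$, and consequently neither of the claims ``$\alpha_1^{c_0}\in\mathcal{F}$ is automatic'' nor ``$\delta^{c_0}\in T\mathcal{H}$'' holds.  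Indeed, $\delta^{c_0}$ is some involutory antidiagonal matrix $\antidiag(a,b,b^{-1},a^{-1})$, and it has the form $TM'(\mu)$ only when $a=b^{t+1}$, a relation that fails for all but a thin set of eigenvector scalings.  More decisively, your proposed correction $c:=c_0M'(\nu)$ cannot help: since $M'(\nu)\in\Sigma$, conjugating by $M'(\nu)$ fixes $\Sigma$ setwise, so $G^{c_0M'(\nu)}=\Sigma$ if and only if $G^{c_0}=\Sigma$ already.  The $M'(\nu)$ factor is only able to move you \emph{within} the normaliser of $\Sigma$; it cannot push a group that lies outside $\Sigma$ into $\Sigma$.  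You are therefore missing a step that resolves the remaining two parameters of diagonal ambiguity.  The paper does this by constructing a basis from the flags of \emph{two} unipotent elements $\alpha_1$ and $\alpha_2=\alpha_1^{\gamma}$ together with the $\gamma$-equivariance constraint $u_3=u_2\gamma$, $u_4=u_1\gamma$, which pins the residual correction to the form $d=\diag(1,d_2,d_2,1)$ (note: this is \emph{not} in $\mathcal{H}$), and then determines $d_2^2$ from the invariant bilinear form preserved by $G^k$; your write-up has no analogue of the form equation.

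A secondary point: your Phase~3 requires solving a $\log_2(q)\times\log_2(q)$ linear system over $\F_2$ to decompose $b_1,b_2$ into $\gen{h_1}$-conjugates of $\alpha_1$, which costs $\OR{(\log q)^2}$ field operations without precomputation.  In the paper this cost is confined to the preprocessing of Theorem~\ref{main_theorem2}; pulling it into the recognition phase as you do would exceed the claimed $\OR{\log(q)\log\log(q)}$ bound of Theorem~\ref{main_theorem}.  The paper avoids this entirely by taking $\gamma=(\alpha_1^2)^{\beta}$ for a random $\beta\notin G_P$, which is an involution by inspection and costs $\OR{1}$.  Your construction of $\delta$ as an involution swapping $P_1$ and $P_2$ is correct in principle and is an attractive idea, but it buys you nothing here while costing both complexity and the need for a prior standardisation that you do not yet possess.
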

\begin{proof}
Let $G = \gen{X}$.
The algorithm proceeds as follows.
\begin{enumerate}
\item Use Algorithm \ref{alg:new_sz_trick} to construct an element $\alpha_1$ of order $4$.
By Theorem \ref{thm:alg_prob}, the expected number of invocations of the algorithm is
$\OR{\log\log(q)}$, so by Theorem \ref{thm:alg_time}, this step requires expected time
$\OR{\log(q)\log\log(q)}$ field operations.
\item Use Theorem \ref{thm_bray_stab_trick} to construct a set of matrices $Y_1=\set{\alpha_1,h_1}$ such that
  $\O2(G_P) < \gen{Y_1} \le G_P$ for some $P$ in our $G$-ovoid. Use $\alpha_1$ for this.
  This requires expected time $\OR{\log(q)\log\log(q)}$ field operations.
  Theorem~\ref{thm_bray_stab_trick}, Step~4 forces $h_1$ to have the required order.
\item There are just three non-trivial proper submodules of $\gen{Y_1}$ (and even of $\gen{\alpha_1}$),
namely $V_1^P<V_2^P<V_3^P$, where $\dim V_i^P=i$ for $i=1,2,3$.
For each $i$, we can obtain $V_i^P$ as the nullspace of $(\alpha_1-1)^i$.
This requires expected time $\OR{1}$ field operations.
\item Choose random $\beta \in G$ that does not fix $P$ (the subspace $V_1^P$ constructed in Step 3),
and let $\gamma = (\alpha_1^2)^{\beta}$ and $Q = P\gamma$.
Then $Q \neq P$ and hence $\gen{Y_1}^{\gamma}=\gen{Y_2} \le G_Q$,
where $Y_2=\set{\alpha_2,h_2}=\set{\alpha_1^{\gamma},h_1^{\gamma}}$.
Also $Y_2^{\gamma}=Y_1$, since $\gamma$ is an involution.
This requires expected time $\OR{1}$ field operations.
(It is probably sensible to take $\beta\in X$ here.)

Note that $G$ is generated by any two of its distinct Sylow $2$-subgroups.
Thus $G=\gen{\O2(G_p),\O2(G_Q)}=\gen{\alpha_1,h_1,\alpha_2,h_2}=\gen{\alpha_1,h_1,\gamma}$.
\item Use the method of Step 3 to construct the non-trivial proper submodules
$V_1^Q<V_2^Q<V_3^Q$ of $\gen{Y_2}$ (so $V_i^Q$ is the nullspace of $(\alpha_2-1)^i$).
\item Define $U_1=V_1^P$, $U_2=V_2^P\cap V_3^Q$, $U_3=V_3^P\cap V_2^Q$, $U_4=V_1^Q$.
For $i=1,2,3,4$, we have $\dim U_i=1$, and also $\gamma$ swaps $U_1$ with $U_4$ and $U_2$ with $U_3$.
We choose nonzero $u_1\in U_1$ and $u_2\in U_2$,
and define $u_3=u_2\gamma\in U_3$ and $u_4=u_1\gamma\in U_4$.
Let $k$ be the inverse of the matrix whose $i$-th row is $u_i$.
Then by the proof of \cite[Lemma 7.4]{baarnhielm05}, there is a diagonal matrix $d\in\GL(4,q)$
such that $(G^k)^d=\Sigma$, our standard copy of $\Sz(q)$. This requires expected time $\OR{1}$ field operations.
(We have corrected the definitions of $U_2$ and $U_3$ here.)
\item Let $J=\antidiag(1,1,1,1)$, and $d=\diag(d_1,d_2,d_3,d_4)$, where $(G^k)^d=\Sigma$.
Then $G^k$ preserves the form $dJd^{\mathsf{T}}=\antidiag(d_1d_4,d_2d_3,d_2d_3,d_1d_4)$,
a form that is unique up to scalars, since $G$ acts absolutely irreducibly on its natural module.
Since $G^k$ contains $\gamma^k=\antidiag(1,1,1,1)$, $\Sigma$ contains
$d^{-1}(\gamma^k)d=\antidiag(d_1^{-1}d_4,d_2^{-1}d_3,d_3^{-1}d_2,d_4^{-1}d_1)$, and so
this must be $M'(\kappa)T$ for some $\kappa\in\F_q^{\times}$, and conjugating this
by the diagonal matrix $M'(\sqrt{\kappa})\in\Sigma$ gives us $T(=J=\gamma^k)$.

Therefore, we may assume that $d$ centralises $\gamma^k$, and this forces $d=\diag(d_1,d_2,d_2,d_1)$.
But conjugating by scalars has no effect, and so we can take $d_1=1$. Therefore
$d=\diag(1,d_2,d_2,1)$ for some $d_2$, and the form preserved by $G^k$ is $K:=\antidiag(1,d_2^2,d_2^2,1)$.
The equations $hKh^{\mathsf T}=K$ for $h\in\set{\alpha_1^k,h_1^k}$
give us many linear equations for $d_2^2$, at least some of which are non-trivial.
(Note that $\gamma^k$ automatically preserves the form $K$.)
This requires $\OR{\log q}$ field operations (to square-root $d_2^2$).

Note that using $\gamma$ to partially standardise $G^k$ simplifies this step compared
to \cite[Lemma 7.3]{baarnhielm05}.
%
\item Let $g=kd$, where $k$ is as constructed in Step 6 and $d$ is as constructed in Step 7.
Then $G^g=\Sigma$, the standard copy of $\Sz(q)$.
It is clear from the proof of \cite[Lemmas 7.3 \& 7.4]{baarnhielm05} that $Pg = P_{\infty} = (1:0:0:0)$ and $Qg = P_0 = (0:0:0:1)$,
hence $\Sigma_{P\infty} \ge \gen{Y_1}^g$ and $\Sigma_{P_0} \ge \gen{Y_2}^g$.
\end{enumerate}
Most of the output criteria and complexity issues have been dealt with as we went along.
To finish off, we note that $\gamma^g=(\gamma^k)^d=T^d=T$.
\end{proof}

\begin{thm} \label{main_theorem2}
Let $X,\alpha_1,h_1,\gamma,g$ be as in Theorem~\ref{main_theorem}.
Thus $G=\gen{X}=\gen{\alpha_1,h_1,\gamma}\cong\Sz(q)$ is a subgroup of\/ $\GL(4,q)$
and $G^g=\Sigma$. Let $h\in \GL(4,F)$. Then:
\begin{enumerate}
\item We can determine whether $h\in G$ in $\OR{1}$ field operations.
\item Given some preprocessing that is independent of $h$, and given that $h\in G$,
we can construct $h$ as an\/ $\SLP$ in $\alpha_1,h_1,\gamma$ in time $\OR{\log(q)}$ field operations.
The\/ $\SLP$ has length\/ $\OR{\log(q)}$. Thus we also get $h$ as an\/ $\SLP$ in $X$.
The preprocessing, which only has to be done once for any $(X,\alpha_1,h_1,\gamma,g)$,
has complexity at most\/ $\OR{(\log(q))^2}$, the true value being dependent on
the complexity of matrix inversion.
\end{enumerate}
(We are counting $\log_2(q)$ bit operations as being $1$ field operation, as one
field operation over\/ $\F_q$ must take at least\/ $\log_2(q)$ bit operations.)
\end{thm}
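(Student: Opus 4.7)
The approach is to push everything into the standard copy $\Sigma$ via $h':=h^g$: we have $h\in G$ iff $h'\in\Sigma$, and any SLP for $h'$ in the standardised generators $\alpha_1^g,h_1^g,\gamma^g$ transports back to an SLP for $h$ in $\alpha_1,h_1,\gamma$. Both parts of the theorem then reduce to exploiting the Bruhat decomposition $\Sigma=\mathcal{F}\mathcal{H}\cup\mathcal{F}\mathcal{H}T\mathcal{F}$ from~\eqref{sz_partition}.

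For Part~(1), I read off the first row $v=(1,0,0,0)\cdot h'$, which is the image of $P_\infty$. If the last entry of $v$ vanishes then $h'\in\mathcal{F}\mathcal{H}$ is the only possibility; I check that $v$ is proportional to $(1,0,0,0)$, read $\lambda$ off the diagonal of $h'$ and $(a,b)$ off its subdiagonal, and verify $h'=U(a,b)M'(\lambda)$ entrywise. Otherwise $v$ must be proportional to a generic ovoid point: recover $a_2=v_3/v_4$, $b_2=v_2/v_4$ and $\lambda=v_4^{t-1}$ (using $(t+1)(t-1)\equiv 1\pmod{q-1}$), then verify that $h'\cdot U(a_2,b_2)^{-1}T^{-1}M'(\lambda)^{-1}$ has the shape $U(a_1,b_1)$ by reading $a_1,b_1$ off its entries and checking the remaining entries against the formula for $U(a_1,b_1)$. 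This amounts to a bounded number of matrix and field operations, giving the claimed $\OR{1}$ cost.

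For Part~(2), the preprocessing first constructs explicit SLPs in $\alpha_1,h_1,\gamma$ for the standard Steinberg generators $U(1,0), M'(\omega), T$ of $\Sigma$. Since $h_1^g\in\mathcal{F}\mathcal{H}$ has odd order not confined to a proper subfield, it diagonalises to $M'(\mu)$ with $\F_2(\mu)=\F_q$; the diagonalising $\mathcal{F}$-conjugator is realisable as an SLP in $\alpha_1,h_1$ thanks to the containment $\O2(G_P)\le\gen{\alpha_1,h_1}$ from Theorem~\ref{thm_bray_stab_trick}, and the discrete logarithm oracle (combined with $\gamma$-conjugation via Steinberg-type identities when $\gen{M'(\mu)}$ is a proper subgroup of $\mathcal{H}$) yields $M'(\omega)$. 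A parallel routine standardises $\alpha_1^g$ to $U(1,0)$, while $\gamma^g=T$ is already standard. Next, using the identity $U(a,b)^{M'(\mu)}=U(a\mu^t,b\mu^{t+2})$, I precompute SLPs of length $\OR{\log q}$ for the $\F_2$-basis $\{U(\omega^i,0),U(0,\omega^i):0\le i\le 2m\}$ of $\mathcal{F}$; this costs $\OR{(\log q)^2}$ field operations overall. To rewrite an actual $h$: apply Part~(1) to get the Bruhat factors $U(a_1,b_1), M'(\lambda), T^\epsilon, U(a_2,b_2)$, expand each $a_i,b_i\in\F_q$ over $\F_2$ in the basis $\{\omega^i\}$, and multiply the precomputed SLPs together using the $\mathcal{F}$-group law $U(a,0)U(c,0)=U(a+c,ac^t)$, absorbing the centre contamination via the stored $U(0,\omega^i)$; realise $M'(\lambda)=M'(\omega)^{\log_\omega\lambda}$ via one discrete log call and repeated squaring. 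The resulting SLP has length $\OR{\log q}$ in the standardised generators (hence the same in $\alpha_1,h_1,\gamma$), and is assembled in $\OR{\log q}$ field operations.

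The main technical obstacle is the preprocessing bootstrap: realising the $\mathcal{F}$-conjugator that diagonalises $h_1^g$ as an SLP in $\alpha_1,h_1$ alone (leveraging only the containment $\O2(G_P)\le\gen{\alpha_1,h_1}$), and producing $M'(\omega)$ itself when $\gen{h_1}$ is a proper subgroup of $G_P$ -- in that case $\gamma$ must be brought in via Steinberg-type identities of $\Sz(q)$ to generate the missing cosets of $\gen{h_1^g}$ inside $\mathcal{H}$. Once that alignment is secured, everything else reduces to routine bookkeeping with the Bruhat decomposition and the $\F_2$-basis $\{\omega^i\}$ of $\F_q$.
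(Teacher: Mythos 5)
Your Part~(1) is essentially the paper's argument: peel off a Bruhat factor $TU(a,b)$ using the first row, then check the residual for the shape $M'(\lambda)U(c,d)$; the bookkeeping details ($\lambda$ read off $v_4^{t-1}$ versus the $(2,2)$-entry of the residual) are interchangeable, and both give $\OR{1}$.

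Part~(2) takes a genuinely different route, and it has a real gap. You want to bootstrap SLPs for the \emph{standard} Steinberg generators $U(1,0)$, $M'(\omega)$, $T$ and then expand everything over the $\F_2$-basis $\{\omega^i\}$. The difficulty, which you flag yourself, is circular: to diagonalise $h_1^g$ to $M'(\mu)$ you need an $\mathcal{F}$-conjugator as an SLP, and to realise a given element of $\mathcal{F}$ as an SLP you need exactly the rewriting machinery you are in the middle of building; likewise, producing $U(1,0)$ from $\alpha_1^g = U(a_1,b_1)$ requires conjugating by some $M'(\nu)$ with $\nu^t a_1 = 1$, which you do not yet have, and when $\gen{h_1^g}$ is properly contained in $\mathcal{H}\,\O_2(G_{P_\infty})$, ``Steinberg-type identities'' to reach $M'(\omega)$ will themselves need $U$-factors you cannot yet write. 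Moreover your per-$h$ step calls $\proc{DiscreteLog}$ to obtain $\log_\omega\lambda$, which is not $\OR{\log q}$ field operations; the whole point of this theorem, emphasised in Theorem~\ref{main_theorem}, is that the discrete logarithm oracle is confined to the initial order-$4$ element search and never used in membership testing.

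The paper avoids both problems by never standardising the generators. It works directly with $f=\alpha_1^g=U(a_1,b_1)$ and $e=h_1^g=M'(\mu)U(a_2,b_2)$, uses the conjugates $f^{e^i}$ and $(f^2)^{e^i}$ (whose $\mathcal{F}$-parameters are $\mu^{it}a_1$ and $\mu^{(t+2)i}a_1^{t+1}$, an $\F_2$-basis of $\F_q$ because $\mu$ generates $\F_q$ over $\F_2$) as the ``basis'' for $\O_2(G_{P_\infty})$, reducing $U(a,b)$ to two $\F_2$-linear solves against precomputed inverse matrices, and eliminates the torus entirely via the identity $M'(\lambda)=zU(0,\lambda^{1+t/2})zU(\lambda^{-t/2},\lambda^{-1-t/2})zU(\lambda^{t/2},0)$, so no $M'(\omega)$, no $U(1,0)$, and no discrete logarithm are ever needed. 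If you want to salvage your approach you would have to replace your $\omega$-basis by the $\mu^{it}a_1$-basis, drop the discrete log, and absorb $M'(\lambda)$ via that identity rather than $M'(\omega)^{\log_\omega\lambda}$ -- at which point you have reconstructed the paper's proof.
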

\begin{proof}
%
We note that $h\in \GL(4,q)$ belongs to $G$ if and only if $h^g\in G^g=\Sigma$, which we solve as follows.
We make use of the partitioning from \eqref{sz_partition}.
\begin{enumerate}
\item Consider the first row of $h^g$. If it is $\mu(1,0,0,0)$ for some $\mu\ne 0$ let $k_1=\I_4$.
If it is $c(a^{t+2}+ab+b^t,b,a,1)$ for some $a,b,\mu$ with $\mu\ne 0$ let $k_1=(TU(a,b))^{-1}$.
In any other case, $h^g$ does not preserve $\OV$, so return \const{fail}.
In successful cases, let $k_0=(h^g)k_1^{-1}$.
\item Now the first row of $k_0$ has the form $\mu(1,0,0,0)$ for some $\mu\ne 0$.
So in the case when $h^g\in\Sigma$, we have $k_0\in\Sigma$, and so $k_0=M'(\lambda)U(c,d)$
for some $c,d,\lambda$ with $\lambda\ne 0$.

Look at the $(2,2)$-entry of $k_0$. This should be $\lambda$, so if it is $0$ then return \const{fail}.
In the other cases, define $k_3=M'(\lambda)$, and $k_2=k_3^{-1}k_0$.
If $k_2$ is $U(c,d)$ for some $c,d$ then we succeed, otherwise we return \const{fail}.
\item In the cases of success, we have also written $h^g$ in the form $k_3k_2k_1=M'(\lambda)U(c,d).1$
or $M'(\lambda)U(c,d)TU(a,b)$ for some $a,b,c,d,\lambda$ with $\lambda\ne 0$.
\end{enumerate}
All the above clearly requires just $\OR{1}$ field operations.
In the successful cases, we wish to write $h$ as an SLP in $X$.
First, we show how to write each of the elements $T$, $M'(\lambda)$ and $U(a,b)$
in terms of $f=\alpha_1^g$, $e=h_1^g$ and $z=\gamma^g$.
The easy one is that $T=z$. We also have
$M'(\lambda)=zU(0,\lambda^{1+t/2})zU(\lambda^{-t/2},\lambda^{-1-t/2})zU(\lambda^{t/2},0)$,
which just defers the problem. Last, but not least, we consider $U(a,b)$.
We have $f=U(a_1,b_1)$ and $e=M'(\mu)U(a_2,b_2)$ for some $a_1,b_1,a_2,b_2,\mu$ with $a_1,\mu\ne 0$.
Now $\O2(\gen{e,f})=\O2(G_{P_{\infty}})$ has generating set
$$
L:=\{\,f^{e^i}, (f^2)^{e^i}\mid 0\le i\le 2m\,\}.
$$
In terms of matrices
$$
f^{e^i} = U(\mu^{it}a_1,*)\quad\mbox{and}\quad
(f^2)^{e^i} = U(0,\mu^{(t+1)it}a_1^{t+1}) = U(0,\mu^{(t+2)i}a_1^{t+1}),
$$
where we have not calculated the starred entry.
By construction of $h_1$,
$\mu$ does not lie in a subfield of $\F_q$, and hence the elements $\mu^{it}a_1$
and $\mu^{(t+2)i}a_1^{t+1}$ form vector space bases for $\F_q$ over $\F_2$.
Now solve a linear system over $\F_2$ and calculate $n_0,\ldots,n_{2m}\in \set{0,1}$
such that $a=a_1(n_0+n_1\mu^t+\cdots +n_{2m}\mu^{2mt})$.
We set 
$$
j_1= \prod_{i = 0}^{2m} (f^{e^i})^{n_i} = f^{n_0}e^{-1}f^{n_1}e^{-1}\ldots e^{-1}f^{n_{2m}}e^{2m},
$$
and note that its matrix has form $U(a,*)$.
We have $U(a,b)=j_1U(0,\beta)$ for some $\beta$, and solve another linear system to obtain
$\beta=a_1^{t+1}(p_0+p_1\mu^{t+2}+\cdots +p_{2m}\mu^{2m(t+2)})$
for some $p_0,\ldots,p_{2m}\in\set{0,1}$.
This gives us 
$$
j_2= \prod_{i = 0}^{2m} ((f^2)^{e_i})^{p_i} =
f^{2p_0}e^{-1}f^{2p_1}e^{-1}\ldots e^{-1}f^{2p_{2m}}e^{2m}=U(0,\beta),
$$
and so $U(a,b)=j_1j_2$ writes $U(a,b)$ as a word or SLP of length $\OR{\log(q)}$ in $\set{e,f}$.

Note that the matrices, and their inverses, used in the linear system solving
do not depend on $h$, and hence we precompute them.
(This leads to $\Theta(\log(q))$ space complexity of the
algorithm, which may be unavoidable in any case.)
We then obtain $n_i$ and $p_i$ by multiplication with these inverse matrices,
which requires $\OR{(\log(q))^2}$ bit operations, and thus $\OR{\log(q)}$ field ($\F_q$) operations.
Therefore, writing $U(a, b)$ as an $\SLP$ in $e$ and $f$
also requires $\OR{\log(q)}$ field operations.
The precomputation requires us to invert two degree $\log_2(q)$ matrices over $\F_2$,
for which the classical algorithm uses $\OR{(\log(q))^3}$ bit operations and thus
$\OR{(\log(q))^2}$ field operations. (It is known that asymptotically faster matrix
inversion algorithms exist. It is not known whether it is possible for Gaussian
elimination to be asymptotically faster than matrix inversion.)

Having shown how to write each of the elements $T$, $M'(\lambda)$ and $U(a,b)$ as
SLPs in $\alpha_1^g$, $h_1^g$, $\gamma^g$, we can now easily obtain $h^g$ as
an SLP in $\alpha_1^g$, $h_1^g$, $\gamma^g$ since we have already noted that
$h^g=M'(\lambda)U(c,d)$ or $M'(\lambda)U(c,d)TU(a,b)$ for some $a,b,c,d,\lambda$.
The same SLP gives $h$ in terms of $\alpha_1$, $h_1$, $\gamma$, and since these
three elements have known SLPs in terms of $X$, so now does $h$.

As we have seen, writing $h$ as an $\SLP$ in $\set{\alpha_1,h_1,\gamma}$
requires at most $5$ invocations of the
above method that writes an element of $\mathcal{F}$ as an $\SLP$ in $\set{e,f}$.
Therefore this requires time complexity $\OR{\log(q)}$ field operations, and
produces an SLP for $h$ having length $\OR{\log(q)}$ in $\set{\alpha_1,h_1,\gamma}$.
\end{proof}

\section{Elements of order $4$}

Following Corollary \ref{cl:4_elts}, we conjecture that the actual
proportion of cosets of $\gen{g}$ possessing an element of order $4$
is
$$
\frac{5q^3-3q^2+14q-16}{8q(q^2+1)},
$$
a value we have checked for $q=2,8,32,128,512$. We associate the vector $(v_1,v_2,v_4)$ to a coset of
$\gen{g}$, where $v_i$ is the number of elements of order $i$ in that coset. We have proved that
there are just nine possibilities for this vector. (Theorem \ref{thm:elt_4} restricts the possible vectors, but some other arguments are needed too.)
The possible vectors, and the number of cosets (of $\gen{g}$) in $\Sz(q)$ that we conjecture have
this vector, are tabulated below.
\begin{center}
\begin{tabular}{rlrl}
vector & \#cosets & vector & \#cosets \\[2pt]
$(0,0,0)$ & $\frac{1}{8}(q-1)(3q^3+2q^2-8q+16)$ & $(0,1,0)$ & $\frac{1}{2}(q-1)q(q+2)$ \\[4pt]
$(0,0,1)$ & $\frac{1}{6}(q-1)q(2q^2+q+20)$ & $(0,1,2)$ & $\frac{1}{2}(q-1)q(q-2)$ \\[4pt]
$(0,0,2)$ & $\frac{1}{4}(q-1)q^2(q-2)$ & $(0,q-1,0)$ & $1$ \\[4pt]
$(0,0,3)$ & $\frac{1}{2}(q-1)q(q-2)$ & $(1,0,0)$ & $1$ \\[4pt]
$(0,0,4)$ & $\frac{1}{24}(q-1)q(q-2)(q-8)$ & &
\end{tabular}
\end{center}
Again, we have checked these values for $q=2,8,32,128,512$. For the case $q=2$ and vector $(0,1,0)$
one should sum the values listed for the cases $(0,1,0)$ and $(0,q-1,0)$.

\section{Implementation and performance}

An implementation of the algorithms described here is available in
$\MAGMA$ \cite{magma}, as part of the \textsc{CompositionTree} package
\cite{MR3283836}. The
implementation uses the existing $\MAGMA$ implementations of the
algorithms described in \cite{lg95}, \cite{crlg95} and \cite[Corollary
14.10]{VonzurGathen03}.

A benchmark of the algorithm in Theorem \ref{main_theorem}, for field
sizes $q = 2^{2m + 1}$, with $m = 1, \ldots, 100$, is given in
Figure~\ref{fig:recognition_benchmark}. For each field size, $100$ random
conjugates of $\Sz(q)$ were recognised, and the average running time
for each call, as well as the average time spent in discrete logarithm
calculations, is displayed. As expected, the running time is
completely dominated by the time to compute discrete logarithms,
and the two plots in Figure~\ref{fig:recognition_benchmark} are almost indistinguishable.

\begin{figure}[ht]
\includegraphics[scale=0.75]{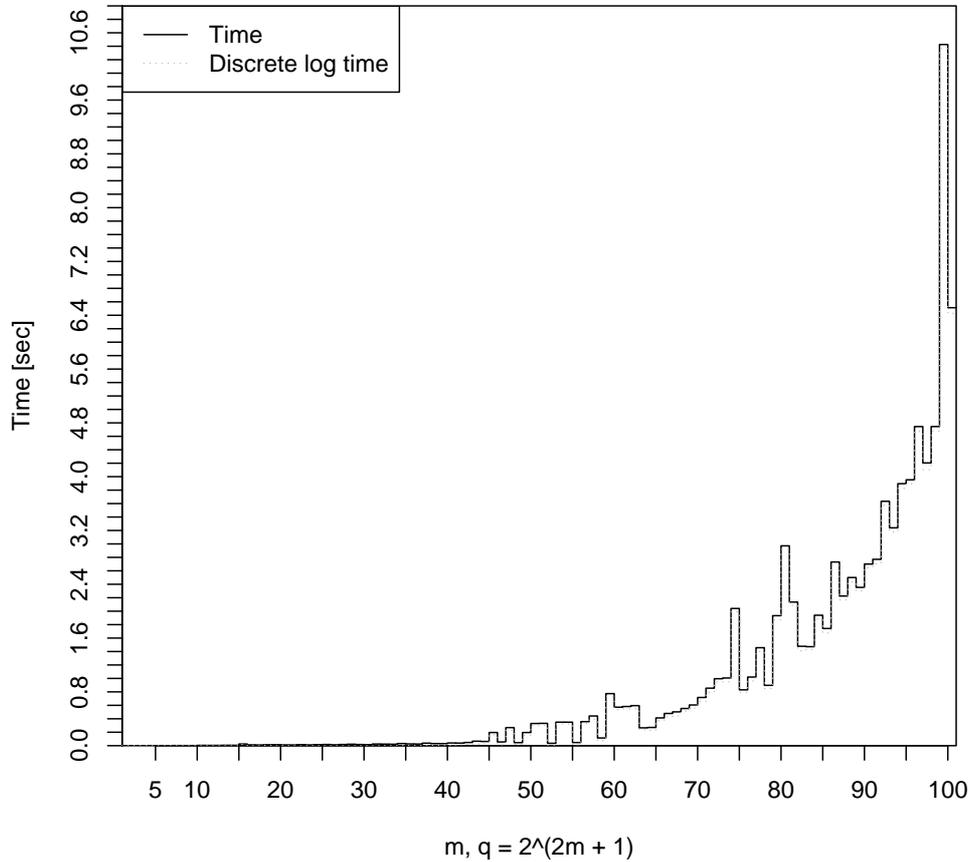}
\caption{Benchmark of recognition}
\label{fig:recognition_benchmark}
\end{figure}

The timings jump up and down due to the cost of discrete logarithms
and due to the way that $\MAGMA$ handles finite field computations: it
uses Zech logarithms for finite fields up to a certain size, and for
larger fields it tries to find a subfield smaller than this
size. Hence field arithmetic speed depends on the prime divisors of
$2m+1$.

We have also benchmarked the algorithm in Theorem \ref{main_theorem2}
in a similar way. For each field size, $\SLP$s of $100$ random
elements were calculated and the average running time for each call is
displayed in Figure \ref{fig:membership_benchmark}. The time to
precomputate the matrices used in the linear system solving is not
included in the running time.

\begin{figure}[ht]
\includegraphics[scale=0.75]{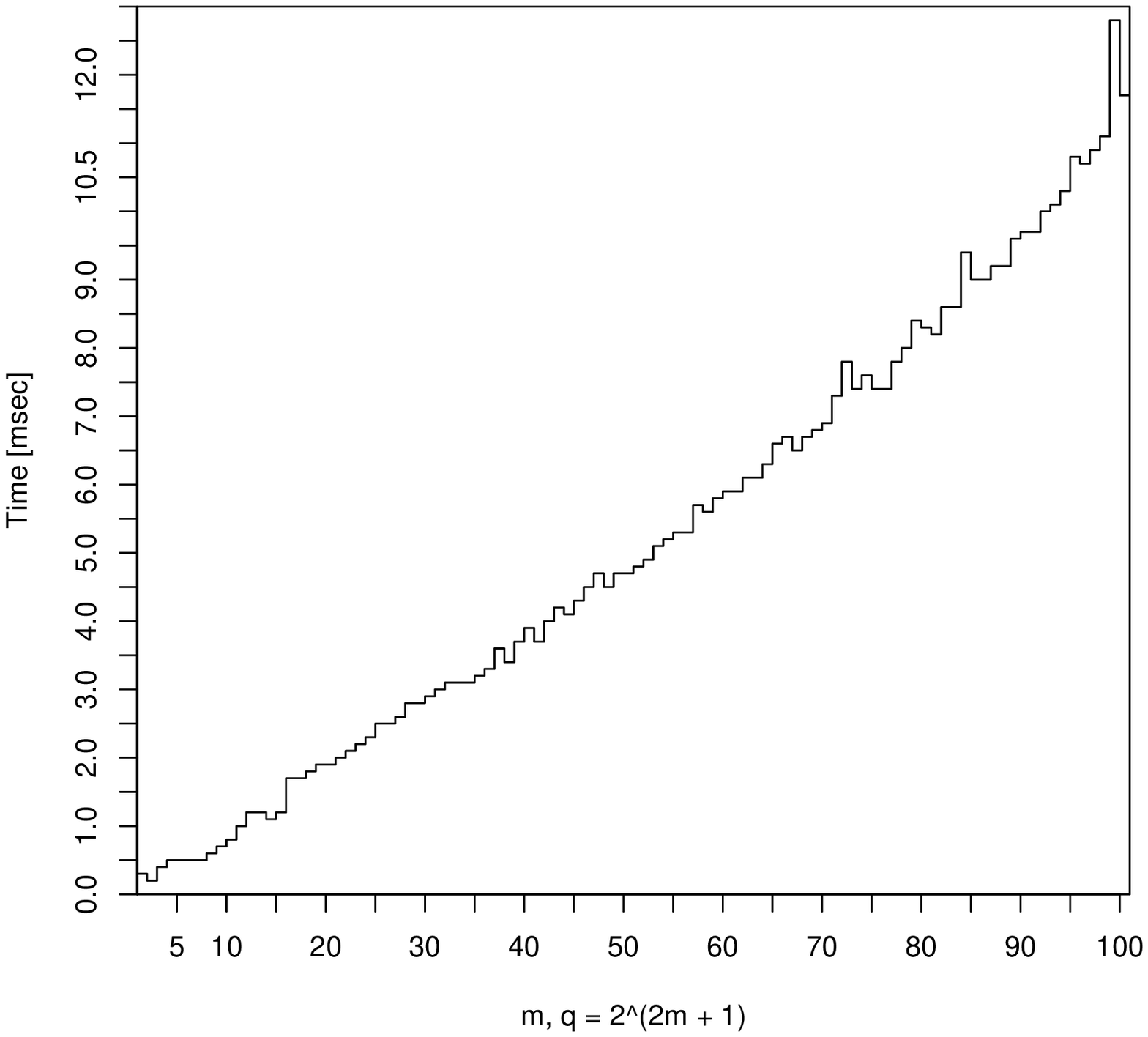}
\caption{Benchmark of membership testing}
\label{fig:membership_benchmark}
\end{figure}

In all benchmarks, we used a generating set of size $2$. We can always
switch to a generating set of this size by choosing random elements of
the input group. The probability that $2$ uniformly random elements
generate the group is high, and we can detect this using \cite[Theorem
$6.2$]{baarnhielm05}.

The benchmark was carried out using $\MAGMA$ V2.22-7, Intel64 CUDA 5.5
flavour, on a PC with an Intel Core i5-4690 CPU running at $3.5$
GHz. We used the software package \textsc{R} \cite{r_man} to produce
the figure.

\bibliographystyle{amsplain}
\bibliography{natsubm}

\end{document}